\documentclass{svjour3}                     
\smartqed  
%
\usepackage[round]{natbib}
\usepackage{times}
\usepackage{enumerate}
\usepackage{graphicx}
\usepackage{amssymb,amsfonts}
\usepackage{amsmath}
\usepackage{algorithm}
\usepackage{algorithmic}
\usepackage{latexsym}
\usepackage{xcolor}
\usepackage[a4paper,bindingoffset=0.2in,%
            left=1in,right=1in,top=1in,bottom=1in,%
            footskip=.25in]{geometry}
\numberwithin{theorem}{section}
\numberwithin{definition}{section}
\numberwithin{proposition}{section}
\numberwithin{lemma}{section}
\numberwithin{remark}{section}
\numberwithin{example}{section}
\numberwithin{algorithm}{section}

\journalname{Int. J. Appl. Comput. Math. \\ }
\begin{document}

\title{Differentiability and optimality of a fuzzy function}

\titlerunning{}        
\author{U. M. Pirzada\thanks{Corresponding author: U. M. Pirzada} \and Debdas Ghosh}

\institute{U. M. Pirzada \at
              School of Engineering and Technology, Navrachana University, Vadodara, India  \\
              \email{salmapirzada@yahoo.com}  \and
           Debdas Ghosh \at
           Department of Mathematical Sciences, Indian Institute of Technology (BHU) Varanasi 221005, India   \\
               \email{debdas.mat@iitbhu.ac.in}
}

\date{}

\maketitle

\begin{abstract}
In this article, we introduce an idea of differentiability for fuzzy functions of fuzzy variables. Explicitly, we define a first order and a second order derivative of a fuzzy function $\tilde{f}: F(\mathbb{R}) \to F(\mathbb{R})$, where $F(\mathbb{R})$ is the set of all fuzzy numbers. In the sequel, we analyze algebra of derivatives of the considered fuzzy functions. With the help of the proposed differentiability notion, we prove a necessary and sufficient condition for optimality to obtain a non-dominated solution of a fuzzy optimization problem. Several numerical examples are given to support the introduced ideas. 
\keywords{Fuzzy functions \and Fuzzy differentiability \and Fuzzy Optimization \and Optimality conditions}
\subclass{03E72\and 26E50}
\end{abstract}

\section{Introduction}
Optimization of fuzzy functions is one of the prominent areas of research in fuzzy mathematics. A considerable number of articles have appeared in this direction. Various types of fuzzy optimization problems are discussed in the classic book that is referred in \cite{LO10}. \cite{LU15} provided a precise and selective look of the existing theory and application of fuzzy optimization. \\

A large number of articles are published on finding optimality conditions for fuzzy optimization problems. Many authors have derived optimality conditions using different notions of differentiability of fuzzy-valued functions. The concept of stationary points for fuzzy optimization problems is studied in \cite{PA08}, \cite{WU107}, \cite{WU209}, and \cite{WU309} under many restrictive situations based on different derivatives. \cite{PI11} studied fuzzy optimization problems concerning a total order relation. A necessary and sufficient optimality condition for unconstrained $L$-fuzzy optimization problems has been proposed in \cite{PI11}. A study on nonlinear unconstrained fuzzy optimization problem is reported in \cite{PA313}. Employing the concept of convexity and Hukuhara differentiability of fuzzy-valued functions, the necessary and sufficient Kuhn-Tucker like optimality conditions for nonlinear fuzzy optimization problems are discussed in \cite{PA211}. Recently, \cite{OS16} studied the necessary and sufficient optimality conditions for fuzzy optimization problems. They derived a necessary optimality condition for a fuzzy function $\tilde{f} : \mathbb{R} \to F(\mathbb{R})$, i.e., for a fuzzy function of real variable. \\

\cite{WU404} proposed an optimal solution concept of the fuzzy optimization problem, which is based on the possibility and necessity measures. Subsequently, duality theories for fuzzy linear programming problems have been studied by \cite{WU503}. \cite{WU603} further introduced a collection of solution concepts of the fuzzy optimization problems using an ordering cone. \cite{WU703} reported a fuzzy-valued Lagrangian function for a fuzzy optimization problem via the idea of a fuzzy scalar product. Further,  \cite{WU703} has also studied the saddle point optimality conditions in the absence of duality gap. With the help of a concept of generalized convexity, sufficient optimality conditions for fuzzy optimization problem have been obtained in \cite{PA110}. The generalized convexity in fuzzy vector optimization through a linear ordering is studied by \cite{AR15}. \\

A Newton method to obtain a non-dominated solution of an unconstrained multi-variable fuzzy optimization problem is proposed by \cite{PI113}. The method is studied with respect to generalized differentiability by \cite{CH15}. A qausi-Newton method is proposed in \cite{GH18} using the concept of generalized differentiability of fuzzy-valued function. Quadratic and cubic interpolation techniques to minimize a univariable fuzzy function have been explored in \cite{GH117} and \cite{GH218}, respectively. \\

From the existing literature on the calculus of fuzzy functions, one can observe that all the derived ideas on the differentiability of fuzzy functions are applicable only for the fuzzy functions of the variables that are real-number-valued. In this paper, we introduce a differentiability concept for a fuzzy function $\tilde{f}: F(\mathbb{R}) \to F(\mathbb{R})$ defined on a fuzzy domain.  The main novelty of this paper is that the variables of the considered fuzzy functions are fuzzy-number-valued. To the best of the authors' knowledge, the concept of differentiability for fuzzy functions with fuzzy-variable is not yet explored in the literature. Using the introduced idea of differentiability, we derive a necessary and sufficient optimality condition for an optimization problem with fuzzy variables.\\

The rest of the paper is organized as follows. In Section \ref{section2}, a few basic definitions related to fuzzy numbers are given. We introduce a new differentiability concept for a fuzzy function using the chain rule in Section \ref{section3}. Besides, we also prove results on the algebra of differentiability in Section \ref{section3}. As an application of the proposed fuzzy differentiation, we prove necessary and sufficient optimality in Section \ref{section4}. Finally, Section \ref{section5} concludes the presented work.

\section{Fuzzy numbers and arithmetic}\label{section2}
We start with some basic definitions that are used throughout the paper. We place a tilde bar over the small letters, $\widetilde{a}$, $\widetilde{b}$, $\widetilde{c}$, \dots, to denote fuzzy sets.

\label{sec:2}
\begin{definition}\label{def1}
(\emph{Fuzzy numbers} \cite{PI11}). Let $\mathbb{R}$ be the set of all real numbers and $\tilde{a}:\mathbb{R} \to[0,1]$ be a fuzzy set. We say that $\tilde{a}$ is a fuzzy number if it satisfies the following properties:
\begin{description}
        \item[(i)] {$\tilde{a}$ is normal, i.e., there exists $r_0 \in \mathbb{R} $ such that $\tilde{a}(r_0)=1$, }
        \item[(ii)] {$\tilde{a}$ is fuzzy convex, i.e.,  $\tilde{a}(\beta r +(1- \beta) t)\geq \min \{\tilde{a}(r),\tilde{a}(t)\}$ for any $r, t \in \mathbb{R}$ and $\beta \in [0,1]$,}
        \item[(iii)] {$\tilde{a}(r)$ is upper semi-continuous on $\mathbb{R}$, i.e, $\{ r | \tilde{a}(r) \geq \alpha \}$ is a closed subset of $\mathbb{R}$ for each $\alpha \in (0,1]$, and}
        \item[(iv)] {$closure\{ r \in \mathbb{R} | \tilde{a}(r) >0\}$ is a compact set.}\\
\end{description}
\end{definition}
In particular, for a fuzzy number $\widetilde{a}$ if there exist two real number $l$ and $r$ such that its membership function is given by
\begin{equation*}
\tilde{a}(t) =
\begin{cases}
\tfrac{a-t}{l} & \text{if}~~ a-l \le t \le a \\
\tfrac{t-a}{r} & \text{if}~~ a \le t \le a + r,
\end{cases}
\end{equation*}
then the fuzzy number $\tilde{a}$ is called a \textit{triangular fuzzy number}. We denote this triangular fuzzy number $\widetilde{a}$ by $(a-l,~a,~a+r)$.  \\ \\
The set of all fuzzy numbers on $\mathbb{R}$ is denoted by $F(\mathbb{R})$. \\ \\
For an $\alpha \in (0,1]$, the $\alpha$-\textit{level set} $\tilde{a}_{\alpha}$ of an $\tilde{a}\in F(\mathbb{R})$ is defined by $\tilde{a}_{\alpha} = \{ r \in \mathbb{R}| \tilde{a}(r)\geq \alpha \}$. The 0-level set $\tilde{a}_{0}$ is defined by the closure of the set $\{ r \in \mathbb{R} ~|~ \tilde{a}(r) >0\}$. \\ \\
From the definition of fuzzy numbers, it is readily followed that for any $\tilde{a}\in F(\mathbb{R})$, $\tilde{a}_{\alpha}$ is a compact and convex subset of $\mathbb{R}$ for each $\alpha \in (0,1]$. We thus write $\tilde{a}_{\alpha}=[a_{1}(\alpha),~ a_{2}(\alpha)]$. \\ \\
A fuzzy number $\tilde{a}$ can be recovered from its $\alpha$-level sets by the well-known \textit{decomposition theorem} (see \cite{GE105}), which states that $$\tilde{a}= \bigcup_{\alpha \in [0,1]} \alpha \cdot \tilde{a}_{\alpha},$$ where union on the right-hand side is the standard fuzzy union.

\begin{definition}\label{def2} Let $\tilde{a}, \tilde{b} \in F(\mathbb{R})$ with $\tilde{a}_{\alpha}=[a_{1}(\alpha),a_{2}(\alpha)]$ and  $\tilde{b}_{\alpha}=[b_{1}(\alpha),b_{2}(\alpha)]$. Let $\lambda$ be a real constant. According to Zadeh's extension principle, \textit{addition},  \textit{multiplication} and \textit{scalar multiplication} in the set of fuzzy numbers $F(\mathbb{R})$ are given by their $\alpha$-level sets as follows:
\begin{eqnarray*}
(\tilde{a}\oplus \tilde{b})_{\alpha} & = &
[a_{1}(\alpha) +b_{1}(\alpha), a_{2}(\alpha) + b_{2}(\alpha)] \\
(\tilde{a}\otimes \tilde{b})_{\alpha} & = &
[\min \{ a_{1}(\alpha)b_{1}(\alpha), a_{1}(\alpha)b_{2}(\alpha), a_{2}(\alpha) b_{1}(\alpha), a_{2}(\alpha) b_{2}(\alpha)\} , \\
& & \max \{ a_{1}(\alpha)b_{1}(\alpha), a_{1}(\alpha)b_{2}(\alpha), a_{2}(\alpha) b_{1}(\alpha), a_{2}(\alpha) b_{2}(\alpha)\}] \\
(\lambda \odot \tilde{a})_{\alpha} & = &
[\lambda\cdot a_{1}(\alpha),\lambda\cdot a_{2}(\alpha)],~\text{if}~\lambda \geq 0 \\
			           & = &
[\lambda\cdot a_{2}(\alpha),\lambda\cdot a_{1}(\alpha)],~\text{if}~\lambda < 0,
\end{eqnarray*}
for any $\alpha \in [0,1]$. The $(\tilde{a} \oplus \tilde{b})$,  $(\tilde{a} \otimes \tilde{b})$ and $(\lambda \odot \tilde{a})$ can be determined with the help of the decomposition theorem.
\end{definition}

\begin{definition} (\textit{Difference of fuzzy numbers} \cite{PI113}).
For the pair of fuzzy numbers $\tilde{a}, \tilde{b}$, let $\tilde{a}_{\alpha}=[a_{1}(\alpha),a_{2}(\alpha)]$ and  $\tilde{b}_{\alpha}=[b_{1}(\alpha),b_{2}(\alpha)]$. The difference $\tilde{a} \ominus \tilde{b}$ is defined using its $\alpha$-level sets as
\begin{eqnarray*}
(\tilde{a}\ominus \tilde{b})_{\alpha} = [a_{1}(\alpha) - b_{2}(\alpha),~ a_{2}(\alpha) - b_{1}(\alpha)],
\end{eqnarray*}
for each $\alpha \in [0,1]$. The difference $\tilde{a} \ominus \tilde{b}$ is determined using the decomposition theorem.
\end{definition}

\begin{definition}\label{def4}(\textit{Distance of fuzzy numbers} \cite{WU404}).
Let $A, B \subseteq \mathbb{R}^{n}$. The Hausdorff metric $d_H $ is defined by
\begin{eqnarray*}
d_H(A,B) = \max\left\{\sup_{x \in A}\inf_{y \in B}\|x-y\|,~ \sup_{y \in B}\inf_{x \in A}\|x-y\|\right\}. \end{eqnarray*}
We consider the metric $d_{F}$ on $F(\mathbb{R})$ is given by
\begin{eqnarray*}
d_{F}(\tilde{a}, \tilde{b})  =  \sup_{0 \leq \alpha \leq 1}\{d_H({\tilde{a}}_{\alpha},{\tilde{b}}_{\alpha})\},
\end{eqnarray*}
for all $\tilde{a}, \tilde{b} \in F(\mathbb{R})$. Since $\tilde{a}_{\alpha}$ and $\tilde{b}_{\alpha}$ are compact intervals in $\mathbb{R}$,
\begin{eqnarray*}
d_{F}(\tilde{a}, \tilde{b})  =  \sup_{0 \leq \alpha \leq 1} \max\left\{
|a_{1}(\alpha)-b_{1}(\alpha)|, |a_{2}(\alpha)- b_{2}(\alpha)|\right\}.
\end{eqnarray*}
The set $F(\mathbb{R})$ forms a complete metric space with respect to $d_{F}$.
\end{definition}

\begin{definition}(\textit{LR-fuzzy number}).
An LR-fuzzy number $\tilde{a}$ has membership function of the form

\begin{equation*}
\tilde{a}(t) =
\begin{cases}
L(\tfrac{t- (a^{L}-l)}{l}) & \text{if}~~ a^{L} - l \le t \le a^{L} \\
1 & \text{if}~~ a^{L} \le t \le a^{U} \\
R(\tfrac{(a^{U} + r) - t}{r}) & \text{if}~~ a^{U} \le t \le a^{U} + r,
\end{cases}
\end{equation*}
where $L, R : [0,1] \to [0,1]$ are two non-decreasing shape functions such that $R(0) = L(0) = 0$ and $R(1) = L(1) = 1$. If $L$ and $R$ are invertible functions, then the $\alpha$-level sets are obtained by

\begin{eqnarray*}
\tilde{a}_{\alpha} = [(a^{L} - l) + lL^{-1}(\alpha), (a^{U} + r) - rR^{-1}(\alpha)]
\end{eqnarray*}
The usual LR-fuzzy notation is $\tilde{a} = ((a^{L} -l), a^{L}, a^{U}, (a^{U}+r))_{L,R}$ for fuzzy number. In particular, triangular fuzzy number is represented by $((a - l), a , (a + r))$, where $a^{L}= a^{U} = a$.
\end{definition}

\section{Differentiation of the fuzzy functions $\tilde{f}: F(\mathbb{R}) \to F(\mathbb{R})$}\label{section3}
\subsection{Interpretation of fuzzy functions }
\begin{definition} \label{def5}
A function $\tilde{f}: F(\mathbb{R}) \to F(\mathbb{R})$ is called a \textit{fuzzy function} defined on $F(\mathbb{R})$. For any crisp variable $x$, we have $\tilde{x} \in F(\mathbb{R}) $ and $\tilde{f}(\tilde{x}) \in F(\mathbb{R})$. Thus, for any $\alpha \in [0, 1]$, the $\alpha$-level set of $\tilde{f}(\tilde{x})$ is a closed and bounded interval. \\ \\
For an $\alpha \in [0, 1]$, let the $\alpha$-level set of $\tilde{x}$ be $[x_{1}(x,\alpha), x_{2}(x,\alpha)]$. It means that fuzzy variable $\tilde{x}$ is a fuzzification of crisp variable $x$. The membership function of fuzzy number $\tilde{x}$ is any LR-fuzzy number.  Corresponding to $\tilde{x}$, we present the $\alpha$-level set of $\tilde{f}(\tilde{x})$ by $[f_1(x, \alpha), f_2(x, \alpha)]$. Evidently, $f_1(x, \alpha)$ and $f_2(x, \alpha)$ are two real-valued composite functions: $f_{1} (x,\alpha) = f_{1}(x_{1}, x_{2}, \alpha)$ and $f_{2}(x, \alpha) = f_{2}(x_{1}, x_{2},\alpha)$ on $\mathbb{R}$. We call $f_{1}(x,\alpha)$ and $f_{2} (x,\alpha)$  the $\alpha$-level functions of the fuzzy function $\tilde{f}$. \\
\end{definition}
We consider several examples to illustrate the definition. \\
\begin{example}\label{exam1}
Let $\tilde{f}(\tilde{x}) = \tilde{x} $ be a function defined on $F(\mathbb{R})$, where $\tilde{x} = ((a^{L} -l), a^{L}, a^{U}, (a^{U}+r))_{L,R}$ is a triangular fuzzy number, when $a^{L} = a^{U} = x$, $l = r = 1$ and $L(r) = R(r) = r$, for each $x \in \mathbb{R}$. For an $\alpha \in [0, 1]$, the $\alpha$-level numbers of $\tilde{x}$ are

\[x_{1}(x,\alpha) = (a^{L} - l) + lL^{-1}(\alpha) = ( x - 1) + \alpha 
\] and
\[
x_{2}(x,\alpha) =  (a^{U} + r) - rR^{-1}(\alpha) = ( x + 1 ) - \alpha.\]

The $\alpha$-level functions $f_{1}(x_{1}, x_{2}, \alpha)$ and $f_{2}(x_{1}, x_{2},\alpha)$ are
\[
f_{1}(x,\alpha) = f_{1}(x_{1}, x_{2}, \alpha) = x_{1}(x,\alpha) = (x - 1) + \alpha
\]
and
\[
f_{2}(x,\alpha) = f_{2}(x_{1}, x_{2}, \alpha) = x_{2}(x,\alpha) = ( x + 1 ) + \alpha.
\]
Let $\tilde{x} = \tilde{1} = (0,1,2)$. Then, $x_{1}(1,\alpha) = \alpha$ and $x_{2}(1,\alpha) = (2 - \alpha)$, for $\alpha \in [0,1]$. The $\alpha$-level functions $f_{1}(x, \alpha) = f_{1}(x_{1}, x_{2}, \alpha)$ and $f_{2}(x,\alpha) = f_{2}(x_{1}, x_{2},\alpha)$ evaluated at $(x_{1}, x_{2})$ are
\[
{f}_{1}(x_{1}, x_{2}, \alpha) = x_{1}(1,\alpha) = \alpha
\]
and
\[
{f}_{2}(x_{1}, x_{2}, \alpha) = x_{2}(1,\alpha) = (2-\alpha),
\]
for $\alpha \in [0,1]$.
\end{example}
\begin{example}\label{exam2}
We consider the fuzzy function $\tilde{f}(\tilde{x}) = \tilde{2} \otimes \tilde{x} $ defined on $F(\mathbb{R})$, where $\tilde{x} = ((a^{L} -l), a^{L}, a^{U}, (a^{U}+r))_{L,R} $, where $a^L = x - 1/2$, $a^{U} = x + 1/2$, $l = u = 1/2$ , for each $x \in \mathbb{R}$ and $L(r)= R(r) = r$. Therefore, $\tilde{x} = (x-1, x-1/2, x+1/2, x+1)$ is a trapezoidal fuzzy number for each $x \in \mathbb{R}$ and $\tilde{2} = (1,2,3)$ is a triangular fuzzy number. For an $\alpha \in [0, 1]$, the $\alpha$-level numbers of $\tilde{x}$ are
\[x_{1}(x,\alpha) = (x-1) + \alpha / 2
\] and
\[
x_{2}(x,\alpha) = (x+1) - \alpha / 2.\]
The $\alpha$-level functions $f_{1}(x_{1}, x_{2}, \alpha)$ and $f_{2}(x_{1}, x_{2},\alpha)$ are
\[
f_{1}(x,\alpha) = f_{1}(x_{1}, x_{2}, \alpha) = (1+\alpha) x_{1}(x,\alpha) = (1+ \alpha) ((x-1) + \alpha /2)
\]
and
\[
f_{2}(x,\alpha) = f_{2}(x_{1}, x_{2}, \alpha) = (3- \alpha) x_{2}(x,\alpha) = (3 - \alpha) ((x+1) - \alpha /2).
\]
Let $\tilde{x} = \tilde{1}$. Then, $x_{1}(1,\alpha) = \alpha /2 $ and $x_{2}(1,\alpha) = 2 - \alpha /2$, for $\alpha \in [0,1]$. The $\alpha$-level functions $f_{1}(x, \alpha) = f_{1}(x_{1}, x_{2}, \alpha)$ and $f_{2}(x,\alpha) = f_{2}(x_{1}, x_{2},\alpha)$ evaluated at $(x_{1}, x_{2})$ are
\[
{f}_{1}(x_{1}, x_{2}, \alpha) = (1+\alpha) \alpha /2
\]
and
\[
{f}_{2}(x_{1}, x_{2}, \alpha) = (3- \alpha) (2-\alpha/2),
\]
for $\alpha \in [0,1]$.
\end{example}

\begin{example}\label{exam3}
We consider the fuzzy function $\tilde{f}(\tilde{x}) = \tilde{x} \oplus \tilde{3}$ defined on $F(\mathbb{R})$, where $\tilde{x}$ is a gaussian shape fuzzy number defined by membership function $\mu_{\tilde{x}} (r) = \exp\big({-\frac{(r - x)^2}{2 \sigma^2}}\big)$ for each $x \in \mathbb{R}$, where $x$ represents centre and $\sigma$ represents width of fuzzy number and $\tilde{3} = (2, 3, 4)$ is a triangular fuzzy number. For an $\alpha \in (0, 1]$, the $\alpha$-level numbers of $\tilde{x}$, where $\sigma = 1$, are
\[x_{1}(x,\alpha) = x - \sqrt{-2 \sigma^2 \log \alpha}
\] and
\[
x_{2}(x,\alpha) = x + \sqrt{-2 \sigma^2  \log \alpha}.\]
For $\alpha \in (0,1]$, the $\alpha$-level functions $f_{1}(x_{1}, x_{2}, \alpha)$ and $f_{2}(x_{1}, x_{2},\alpha)$ are
\[
f_{1}(x,\alpha) = f_{1}(x_{1}, x_{2}, \alpha) = ( x - \sqrt{-2 \log \alpha}) + ( 2 + \alpha)
\]
and
\[
f_{2}(x,\alpha) = f_{2}(x_{1}, x_{2}, \alpha) = (x + \sqrt{-2 \log \alpha}) + (4 -\alpha).
\]
\end{example}
\subsection{Continuity of fuzzy functions}

\begin{definition}\label{conti}
A fuzzy function $\tilde{f}: F(\mathbb{R}) \to F(\mathbb{R})$ is said to be continuous at $\tilde{x}_{0} \in F(\mathbb{R})$ if for any $\epsilon > 0$, there exists a $\delta> 0 $ such that $d_{F}(\tilde{f}(\tilde{x}), \tilde{f}(\tilde{x}_{0})) < \epsilon$ whenever $d_{F}(\tilde{x}, \tilde{x}_{0}) < \delta$.
\end{definition}

\begin{theorem}
If $\tilde{f}: F(\mathbb{R}) \to F(\mathbb{R})$ is continuous at $\tilde{x}_{0} \in F(\mathbb{R})$, then its $\alpha$-level functions $f_{1}(x_{1}, x_{2}, \alpha)$ and $f_{2}(x_{1}, x_{2},\alpha)$ are continuous at $(x_{1}(x_{0},\alpha), x_{2}(x_{0}, \alpha))$ for each $\alpha$. Further, if $x_{1}(x,\alpha)$ and $x_{2}(x,\alpha)$ are continuous at $x_{0} \in \mathbb{R}$ for each $\alpha$, then both $f_{1}(x_{1}, x_{2}, \alpha)$ and $f_{2}(x_{1}, x_{2},\alpha)$ are continuous at $x_{0}$.
\end{theorem}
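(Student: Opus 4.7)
The plan is to use Definition \ref{def4} to re-express the fuzzy continuity of $\tilde{f}$ as a uniform-in-$\beta$ statement on the endpoint functions, and then to exploit this statement at a single chosen level $\alpha$. Recall that for $\tilde{a}, \tilde{b} \in F(\mathbb{R})$,
\[
d_{F}(\tilde{a}, \tilde{b}) = \sup_{\beta \in [0,1]} \max\{|a_{1}(\beta)-b_{1}(\beta)|,\; |a_{2}(\beta)-b_{2}(\beta)|\}.
\]
Applied to $\tilde{x}$ and $\tilde{x}_{0}$, continuity of $\tilde{f}$ at $\tilde{x}_{0}$ translates to the following statement: for each $\epsilon>0$ there exists $\delta>0$ such that, whenever $|x_{i}(x,\beta)-x_{i}(x_{0},\beta)|<\delta$ for all $\beta$ and $i\in\{1,2\}$, one has $|f_{i}(x_{1}(x,\beta),x_{2}(x,\beta),\beta)-f_{i}(x_{1}(x_{0},\beta),x_{2}(x_{0},\beta),\beta)|<\epsilon$ for all $\beta$ and $i\in\{1,2\}$.

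For the first claim, I would fix $\alpha\in[0,1]$ and show that $(x_{1},x_{2})\mapsto f_{i}(x_{1},x_{2},\alpha)$ is continuous at $(x_{1}(x_{0},\alpha),x_{2}(x_{0},\alpha))$. Given a perturbed pair $(\xi_{1},\xi_{2})$, I would realize it as the $\alpha$-level set of a fuzzy number $\tilde{x}'$ close to $\tilde{x}_{0}$ by the level-wise shift
\[
\tilde{x}'_{\beta}=[x_{1}(x_{0},\beta)+s_{1},\; x_{2}(x_{0},\beta)+s_{2}],\quad \beta\in[0,1],
\]
where $s_{i}=\xi_{i}-x_{i}(x_{0},\alpha)$. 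For $(s_{1},s_{2})$ sufficiently small, these level sets remain nested and non-empty, so by the decomposition theorem they assemble into a genuine element of $F(\mathbb{R})$. By construction, $\tilde{x}'_{\alpha}=[\xi_{1},\xi_{2}]$ and $d_{F}(\tilde{x}',\tilde{x}_{0})=\max\{|s_{1}|,|s_{2}|\}$. Choosing $\delta$ from the fuzzy continuity of $\tilde{f}$ at $\tilde{x}_{0}$, the condition $\max\{|\xi_{1}-x_{1}(x_{0},\alpha)|,|\xi_{2}-x_{2}(x_{0},\alpha)|\}<\delta$ then forces $d_{F}(\tilde{f}(\tilde{x}'),\tilde{f}(\tilde{x}_{0}))<\epsilon$, and reading off the $\alpha$ component gives $|f_{i}(\xi_{1},\xi_{2},\alpha)-f_{i}(x_{1}(x_{0},\alpha),x_{2}(x_{0},\alpha),\alpha)|<\epsilon$, as required.

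For the second claim, I would assume additionally that each $x_{i}(\cdot,\alpha)$ is continuous at $x_{0}$ and argue by a standard two-step composition. Given $\epsilon>0$, the first claim supplies $\eta>0$ such that $\max\{|\xi_{i}-x_{i}(x_{0},\alpha)|\}<\eta$ implies $|f_{i}(\xi_{1},\xi_{2},\alpha)-f_{i}(x_{1}(x_{0},\alpha),x_{2}(x_{0},\alpha),\alpha)|<\epsilon$; continuity of $x_{i}(\cdot,\alpha)$ at $x_{0}$ then supplies $\delta>0$ making $|x-x_{0}|<\delta$ imply $|x_{i}(x,\alpha)-x_{i}(x_{0},\alpha)|<\eta$. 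Chaining the two estimates yields continuity of $x\mapsto f_{i}(x_{1}(x,\alpha),x_{2}(x,\alpha),\alpha)$ at $x_{0}$.

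The main obstacle, and the only real technical content, is the construction of the shifted fuzzy number $\tilde{x}'$ used to realize arbitrary two-dimensional perturbations of the $\alpha$-level endpoints as the level set of a nearby fuzzy number. One needs to verify nestedness, non-emptiness, upper semi-continuity and compactness of support of the reconstructed membership function. Each of these inherits from the corresponding property of $\tilde{x}_{0}$ once the shifts are small compared to the diameter of its level sets, so this is essentially bookkeeping rather than a substantive difficulty; nevertheless, a careful proof must make this verification explicit.
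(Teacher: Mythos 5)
Your overall skeleton matches the paper's: both proofs unpack $d_F$ as a supremum over levels of the endpoint deviations, read off the resulting inequality at a fixed level $\alpha$ to obtain the first claim, and get the second claim by composing with the continuity of $x_i(\cdot,\alpha)$ at $x_0$. Your translation of fuzzy continuity is in fact more careful than the paper's, since you keep the hypothesis uniform in $\beta$ (the paper silently passes from an endpoint bound at a single level to $d_F(\tilde{x},\tilde{x}_0)<\delta$, which needs the bound at every level). The genuine divergence is that you aim at two-variable continuity of $(x_1,x_2)\mapsto f_i(x_1,x_2,\alpha)$ against arbitrary perturbations $(\xi_1,\xi_2)$, whereas the paper settles for continuity along the one-parameter family $x\mapsto(x_1(x,\alpha),x_2(x,\alpha))$, which is weaker but is also all that the second claim uses. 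Your stronger reading is the honest one if ``continuous at $(x_1(x_0,\alpha),x_2(x_0,\alpha))$'' is to mean what it usually means for a function of two real variables, and the shift construction is the right device for it.

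However, the construction has a gap exactly at the step you dismiss as bookkeeping. Non-emptiness of $\tilde{x}'_\beta=[x_1(x_0,\beta)+s_1,\;x_2(x_0,\beta)+s_2]$ requires $s_1-s_2\le x_2(x_0,\beta)-x_1(x_0,\beta)$ for every $\beta$, and the right-hand side is non-increasing in $\beta$, attaining its minimum at $\beta=1$, where it equals $0$ for any fuzzy number with a singleton core --- in particular for every triangular fuzzy number, the paper's standing example. So for any perturbation with $s_1>s_2$, no matter how small, the $1$-level set of $\tilde{x}'$ is empty and the decomposition theorem does not apply; ``small compared to the diameter of its level sets'' is vacuous because that diameter is $0$ at the top. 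As written, your argument only covers perturbations in the half-plane $s_1\le s_2$. A repair is available: assuming $\xi_1\le\xi_2$ (which is forced anyway if $(\xi_1,\xi_2)$ is to be a level set of anything), keep the shifted intervals for $\beta\le\alpha$, where the diameter is bounded below by $x_2(x_0,\alpha)-x_1(x_0,\alpha)$, and for $\beta>\alpha$ replace any empty shifted interval by the singleton at the crossing point of the two shifted endpoint curves; this keeps the family nested, preserves $\tilde{x}'_\alpha=[\xi_1,\xi_2]$, and still yields $d_F(\tilde{x}',\tilde{x}_0)=O\left(\max\{|s_1|,|s_2|\}\right)$. You should also state explicitly what the domain of $f_i(\cdot,\cdot,\alpha)$ is, since restricting to admissible perturbations is what makes the repaired statement meaningful.
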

\begin{proof}
Since $\tilde{f}$ is continuous at $\tilde{x}_{0}$, by definition of continuity, for any $\epsilon > 0$ there exists a $\delta > 0$ such that \[
d_{F}(\tilde{f}(\tilde{x}), \tilde{f}(\tilde{x}_{0})) < \epsilon
\] 
whenever $d_{F}(\tilde{x}, \tilde{x}_{0}) < \delta$. Using definition of metric $d_{F}$,

\begin{eqnarray*}
& & \sup_{\alpha} \max\{ | f_{1}(x_{1}(x,\alpha), x_{1}(x,\alpha), \alpha) - f_{1}(x_{1}(x_{0},\alpha), x_{1}(x_{0},\alpha), \alpha )|, \\
& & | f_{2}(x_{1}(x,\alpha), x_{1}(x,\alpha), \alpha) - f_{2}(x_{1}(x_{0},\alpha), x_{1}(x_{0},\alpha), \alpha )| \} < \epsilon
\end{eqnarray*}
This implies 
\[
| f_{1}(x_{1}(x,\alpha), x_{1}(x,\alpha), \alpha) - f_{1}(x_{1}(x_{0},\alpha), x_{1}(x_{0},\alpha), \alpha )| < \epsilon
\]
and
\[
| f_{2}(x_{1}(x,\alpha), x_{1}(x,\alpha), \alpha) - f_{1}(x_{1}(x_{0},\alpha), x_{1}(x_{0},\alpha), \alpha )| < \epsilon
\]
for each $\alpha$ whenever $| x_{1}(x,\alpha) - x_{1}(x_{0}, \alpha)| < \delta $ and $| x_{2}(x,\alpha) - x_{2}(x_{0}, \alpha)| < \delta $. Therefore, $f_{1}$ and $f_{2}$ are continuous $(x_{1}(x_{0}, \alpha), x_{2}(x_{0}, \alpha))$ for each $\alpha$. \\

Now since, $x_{1}(x, \alpha)$ and $x_{2}(x, \alpha)$ are continuous at $x_{0}$, for and $\epsilon_{1}, \epsilon_{2}> 0$ there exists a $\delta_{1} > 0$ such that 
\[
| x_{1} (x, \alpha) - x_{1}(x_{0}, \alpha)| < \epsilon_{1}  
\]
and
\[
| x_{1} (x, \alpha) - x_{1}(x_{0}, \alpha)| < \epsilon_{2}  
\]
whenever $|x - x_{0}| < \delta_{1}$ for each $\alpha$. Take $\epsilon = \min\{\epsilon_{1}, \epsilon_{2} \}$, we have
\[
| f_{1}(x_{1}(x,\alpha), x_{1}(x,\alpha), \alpha) - f_{1}(x_{1}(x_{0},\alpha), x_{1}(x_{0},\alpha), \alpha )| < \epsilon
\]
and
\[
| f_{2}(x_{1}(x,\alpha), x_{1}(x,\alpha), \alpha) - f_{1}(x_{1}(x_{0},\alpha), x_{1}(x_{0},\alpha), \alpha )| < \epsilon,
\]
whenever $|x - x_{0}| < \delta_{1}$ for each $\alpha$. Therefore, $f_{1}$ and $f_{2}$ are continuous at $x_{0}$.

\end{proof}

\begin{example}
Consider the fuzzy function $\widetilde{f}(\widetilde{x}) = (1, 2, 4) \odot \widetilde{x}^2 \oplus (0, 1, 5)$, where $\widetilde{x} = (x-1, x, x + 1)$, $x \ge 1$. \\ \\
For this function, we attempt to check the continuity at $\widetilde{x}_0 = \widetilde{3} = (2,3,4)$. \\ \\
The $\alpha$-level functions of the considered function are
\[f_1(x, \alpha) = \left(1-\alpha\right)\left(x - 1 + \alpha\right)^2 + \alpha ~~\text{and}~~f_2(x, \alpha) = (4 - 2\alpha)\left(x + 1 - \alpha\right)^2 + \left( 5 - 4 \alpha\right).\]
Note that $d_F\left(\widetilde{f}(\widetilde{x}), \widetilde{f}(\widetilde{x}_0)\right) < \epsilon$ whenever
\[|f_1(x, \alpha) - f_1(3, \alpha)|<\epsilon \text{ and } |f_2(x, \alpha) - f_2(3, \alpha)|<\epsilon \text{ for all } \alpha \in [0, 1]. \]
We see that
\[|f_1(x, \alpha) - f_1(3, \alpha)| = |(x - 3) \{(1 - \alpha) (x + 1 + 2 \alpha)\}| \]
and
\[|f_2(x, \alpha) - f_2(3, \alpha)| = |(x - 3) \{(4 - 2 \alpha) (x + 5 - 2\alpha)\}|. \] \\
Observe that, for $|x - 3| < 1$, we have $(1 - \alpha) (x + 1 + 2 \alpha) \le x + 1 + 2 \alpha \le x + 3 < 7$. \\ \\
Therefore, $|f_1(x, \alpha) - f_1(3, \alpha)| < \epsilon$ holds whenever $|x - 3| < \delta_1 := \min \left\{1, \tfrac{\epsilon}{7}\right\}$. \\ \\
Again, note that, for $|x - 3| < 1$, we have $\{(4 - 2 \alpha) (x + 5 - 2\alpha)\} \le 4 (x + 5 - 2 \alpha) \le 4 (x + 5) < 36$. \\ \\
Hence, $|f_2(x, \alpha) - f_2(3, \alpha)| < \epsilon$ holds whenever $|x - 3| < \delta_2 := \min \left\{1, \tfrac{\epsilon}{36}\right\}$. \\ \\
Thus, accumulating all, we see that for $\delta = \min \{\delta_1,~ \delta_2\} = \min\left\{1, \tfrac{\epsilon}{36}\right\}$,
\[d_F\left(\widetilde{f}(\widetilde{x}), \widetilde{f}(\widetilde{x}_0)\right) < \epsilon
\text{ holds whenever }
d_F\left( \widetilde{x} ,  \widetilde{x}_0 \right) < \delta.\]
This shows the continuity of $\widetilde{f}$ at $\widetilde{x}_0$.
\end{example}

\subsection{Differentiability of fuzzy functions}
\begin{definition}
Let $\tilde{f}: F(\mathbb{R}) \to F(\mathbb{R})$ be a fuzzy function. We say that the the function $\tilde{f}$ is \textit{differentiable} if the following three conditions hold.
\begin{enumerate}[(i)]
\item The functions $x_{1}(x,\alpha) $ and $x_{2}(x,\alpha)$, for $\alpha \in [0,1]$, are differentiable with respect to $x$.
\item Both of $f_{1}(x_{1}, x_{2}, \alpha)$ and $f_{2}(x_{1}, x_{2}, \alpha)$, for each $\alpha \in [0,1]$ are differentiable. Then,
        we denote
        \[ ~~~f_{1}^{\prime}(x_{1}, x_{2}, \alpha) = {{\partial f_{1}} \over {\partial x_{1}}} {{d x_{1}} \over {d x}} +  {{\partial f_{1}} \over {\partial x_{2}}} {{d x_{2}} \over {d x}}~ \text{and}\]
        \[ f_{2}^{\prime}(x_{1}, x_{2}, \alpha) = {{\partial f_{2}} \over {\partial x_{1}}} {{d x_{1}} \over {d x}} +  {{\partial f_{2}} \over {\partial x_{2}}} {{d x_{2}} \over {d x}}. \]
\item The union of the intervals
    \[\bigcup_{\alpha \in [0, 1]} \alpha~\left[\min\{f_{1}^{\prime}(x_{1}, x_{2}, \alpha), f_{2}^{\prime}(x_{1}, x_{2}, \alpha)\},~ \max\{f_{1}^{\prime}(x_{1}, x_{2}, \alpha), f_{2}^{\prime}(x_{1}, x_{2}, \alpha)\}\right]\]
      constitutes a fuzzy number. \\
\end{enumerate}
\end{definition}
Once a fuzzy function $\tilde{f}$ is differentiable at $\tilde{x}_0$, we denote the \textit{derivative value} by $\tilde{f}^{\prime}(\tilde{x}_0)$. Evidently, for $\alpha \in [0, 1]$,
\[\left[\widetilde{f}^{\prime}(\tilde{x}_0)\right]_{\alpha} = \left[\min\{f_{1}^{\prime}(x_{1}, x_{2}, \alpha), f_{2}^{\prime}(x_{1}, x_{2}, \alpha)\},~ \max\{f_{1}^{\prime}(x_{1}, x_{2}, \alpha), f_{2}^{\prime}(x_{1}, x_{2}, \alpha)\}\right].\]
\begin{example}\label{example_fig}
Consider the fuzzy function $\tilde{f}(\tilde{x}) = \tilde{x} \otimes \tilde{x}$, $\tilde{x} = (x-1,x, x+1)$ and $0 \leq x \leq 1$. For this function, the lower $\alpha$-level function is
\[
f_{1}(x_{1}\left(x,\alpha), x_{2}(x,\alpha), \alpha\right) = \min\left\{ x_{1}^2, x_{1} x_{2}, x_{2}^2 \right\},
\]
where $x_{1}(x,\alpha) = (1-\alpha)(x-1) + \alpha x$ and $ x_{2}(x,\alpha) = (1-\alpha)(x+1) + \alpha x$. The function $f_{1}$ is not differentiable with respect to $x$ for each $\alpha$. The graphs of $f_{1}$ for different $\alpha$'s are shown in Fig. 1. From the figure, we see that for each fixed $\alpha$, function $f_{1}(x, \alpha)$ has a corner at different $x$ and therefore it is not differentiable there.    Hence, the fuzzy function $\tilde{f}$ is also not differentiable.
\end{example}

\begin{figure}[h]
\centering
\includegraphics[width=5.0in]{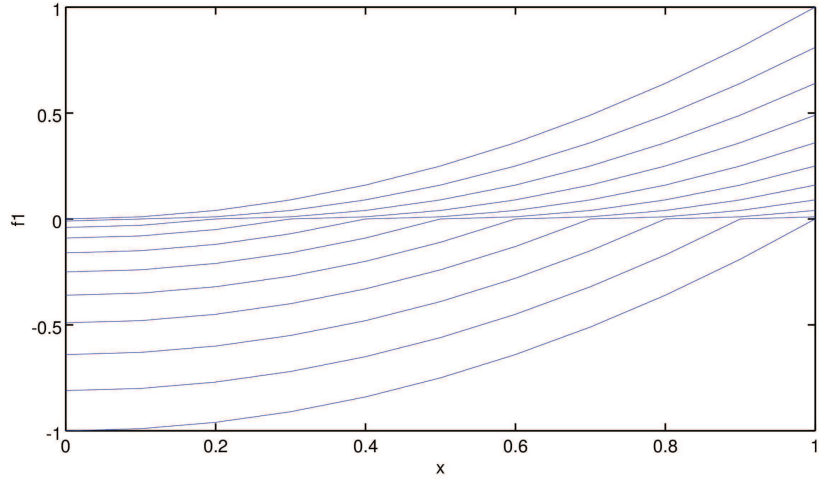}\label{figure1}
\caption{The graph of the function $f_{1}$ in Example \ref{example_fig} for different $\alpha$'s}
\end{figure}

\begin{definition}
A fuzzy function $\tilde{f}: F(\mathbb{R}) \to F(\mathbb{R})$ is said to be \textit{continuously differentiable} if its derivative is a continuous fuzzy function.
\end{definition}
\begin{example}
Consider the fuzzy function in Example \ref{exam3}. The $\alpha$-level functions $f_{1}(x_{1}, x_{2}, \alpha)$ and $f_{2}(x_{1}, x_{2},\alpha)$ are given by
\[
f_{1}(x_{1}, x_{2}, \alpha) = ( x - \sqrt{-2 \log \alpha}) + ( 2 + \alpha)
\]
and
\[
f_{2}(x_{1}, x_{2}, \alpha) = (x + \sqrt{-2 \log \alpha}) + (4 -\alpha).
\]
Here
\begin{eqnarray*}
f_{1}^{\prime}(x_{1}, x_{2}, \alpha) 
= {{\partial f_{1}} \over {\partial x_{1}}} {{d x_{1}} \over {d x}} +  {{\partial f_{1}} \over {\partial x_{2}}} {{d x_{2}} \over {d x}} 
= {{\partial f_{1}} \over {\partial x_{1}}} 1 + 0 = 1,
\end{eqnarray*}
as ${{\partial f_{1}} \over {\partial x_{1}}} = 1$ and ${{\partial f_{1}} \over {\partial x_{2}}} = 0$, and
\begin{eqnarray*}
f_{2}^{\prime}(x_{1}, x_{2}, \alpha) 
= {{\partial f_{2}} \over {\partial x_{1}}} {{d x_{1}} \over {d x}} +  {{\partial f_{2}} \over {\partial x_{2}}} {{d x_{2}} \over {d x}} 
= 0 + {{\partial f_{2}} \over {\partial x_{2}}}  1 = 1
\end{eqnarray*}
as ${{\partial f_{2}} \over {\partial x_{2}}} = 1$ and ${{\partial f_{2}} \over {\partial x_{1}}} = 0$. \\
The $\alpha$-level sets of the derivative of $\tilde{f}$ are given by
\begin{eqnarray*}
[\tilde{f}^{\prime}(\tilde{x})]_{\alpha} = [f_{1}^{\prime}(x_{1}, x_{2}, \alpha), f_{2}^{\prime}(x_{1}, x_{2}, \alpha)] = [1,1].
\end{eqnarray*}
Hence, derivative of $\tilde{f}(\tilde{x}) = \tilde{x}$ is $1$. \\
\end{example}

\begin{example}
Let $\tilde{f}$ be the fuzzy function defined by $\tilde{f}(\tilde{x}) = \tilde{x} \otimes \tilde{x}$, where $\tilde{x} = (x-1, x, x+1)$ is a triangular fuzzy number with $x \geq 1$. The $\alpha$-level numbers for $\tilde{x}$ are
\[x_{1}(x,\alpha) = (1-\alpha)(x-1) + \alpha x
\] and
\[
x_{2}(x,\alpha) = (1-\alpha)(x+1) + \alpha x. \]
The $\alpha$-level functions $f_{1}(x_{1}, x_{2}, \alpha)$ and $f_{2}(x_{1}, x_{2},\alpha)$ are
\[
f_{1}(x_{1}, x_{2}, \alpha) = x_{1}^{2}(x,\alpha) = ((1-\alpha)(x-1) + \alpha x)^2
\]
and
\[
f_{2}(x_{1}, x_{2}, \alpha) = x_{2}^{2}(x,\alpha) = ((1-\alpha)(x+1) + \alpha x)^2.
\]
 Here
\begin{eqnarray*}
f_{1}^{\prime}(x_{1}, x_{2}, \alpha) = {{\partial f_{1}} \over {\partial x_{1}}} {{d x_{1}} \over {d x}} +  {{\partial f_{1}} \over {\partial x_{2}}} {{d x_{2}} \over {d x}} = {{\partial f_{1}} \over {\partial x_{1}}} ((1-\alpha) + \alpha) + 0 = 2 x_{1},
\end{eqnarray*}
as ${{\partial f_{1}} \over {\partial x_{1}}} = 2 x_{1} $ and ${{\partial f_{1}} \over {\partial x_{2}}} = 0$, and
\begin{eqnarray*}
f_{2}^{\prime}(x_{1}, x_{2}, \alpha) = {{\partial f_{2}} \over {\partial x_{1}}} {{d x_{1}} \over {d x}} +  {{\partial f_{2}} \over {\partial x_{2}}} {{d x_{2}} \over {d x}} = 0 + {{\partial f_{2}} \over {\partial x_{2}}} ((1-\alpha) + \alpha) = 2 x_{2}
\end{eqnarray*}
as ${{\partial f_{2}} \over {\partial x_{2}}} = 2 x_{2}$ and ${{\partial f_{2}} \over {\partial x_{1}}} = 0$. \\
The $\alpha$-level sets of the derivative of $\tilde{f}$ are given by
\begin{eqnarray*}
\left[\tilde{f}^{\prime}(\tilde{x})\right]_{\alpha} = [f_{1}^{\prime}(x_{1}, x_{2}, \alpha), f_{2}^{\prime}(x_{1}, x_{2}, \alpha)] =  2 [x_{1}(x,\alpha), x_{2}(x,\alpha)].
\end{eqnarray*}
Hence, the derivative of $\tilde{f}(\tilde{x}) = \tilde{x}$ is $\tilde{f}^{\prime}(\tilde{x}) = 2 \odot \tilde{x}$. \\
\end{example}

\begin{example}
Consider the fuzzy function defined by $\tilde{f}(\tilde{x}) = \exp(\widetilde{-x})$, where $\widetilde{-x}$ is trapezoidal fuzzy number for each $x \in \mathbb{R}$. The $\alpha$-level numbers for $\widetilde{-x}$ are

\[x_{1}(x,\alpha) = (-x - 1) + \alpha /2
\] and
\[
x_{2}(x,\alpha) = (-x + 1) - \alpha /2.\]
The $\alpha$-level functions $f_{1}(x_{1}, x_{2}, \alpha)$ and $f_{2}(x_{1}, x_{2},\alpha)$ are
\[
f_{1}(x_{1}, x_{2}, \alpha) = \exp(x_{1}(x,\alpha))
\]
and
\[
f_{2}(x_{1}, x_{2}, \alpha) = \exp(x_{2}(x,\alpha)).
\]Here
\begin{eqnarray*}
f_{1}^{\prime}(x_{1}, x_{2}, \alpha) 
= {{\partial f_{1}} \over {\partial x_{1}}} {{d x_{1}} \over {d x}} +  {{\partial f_{1}} \over {\partial x_{2}}} {{d x_{2}} \over {d x}} 
= -\exp(x_{1}(x,\alpha)),
\end{eqnarray*}
and
\begin{eqnarray*}
f_{2}^{\prime}(x_{1}, x_{2}, \alpha) 
= {{\partial f_{2}} \over {\partial x_{1}}} {{d x_{1}} \over {d x}} +  {{\partial f_{2}} \over {\partial x_{2}}} {{d x_{2}} \over {d x}} 
= -\exp(x_{2}(x,\alpha)).
\end{eqnarray*}
The $\alpha$-level sets of the derivative of $\tilde{f}$ are given by
\begin{align*}
[\tilde{f}^{\prime}(\tilde{x})]_{\alpha} & = \left[\min\{f_{1}^{\prime}(x_{1}, x_{2}, \alpha), f_{2}^{\prime}(x_{1}, x_{2}, \alpha)\}, \max\{f_{1}^{\prime}(x_{1}, x_{2}, \alpha), f_{2}^{\prime}(x_{1}, x_{2}, \alpha)\}\right] \\
& =  \left[ -\exp(x_{2}(x,\alpha)), -\exp(x_{1}(x,\alpha))\right]
\end{align*}
which, evidently, defines a fuzzy number for each $\tilde{x}$. Hence, derivative of $\tilde{f}(\tilde{x}) = \exp(\widetilde{-x})$ is $\tilde{f}^{\prime}(\tilde{x}) = -\exp(\widetilde{-x})$.
\end{example}

\subsection{Algebra of derivatives of fuzzy functions}
The following theorems describe  some properties of the derivatives of $\tilde{f}:F(\mathbb{R}) \to F(\mathbb{R})$.
\begin{theorem}
Assume that $\tilde{f}, \tilde{g}:F(\mathbb{R}) \to F(\mathbb{R})$ are differentiable functions at $\tilde{x}$. Then $\tilde{f} \oplus \tilde{g}$ and $\tilde{f} \ominus \tilde{g}$ are also differentiable at $\tilde{x}$. The derivatives at $\tilde{x}$ are given by the following formulae:
\begin{enumerate}
\item [(a)] $(\tilde{f} \oplus \tilde{g})^{\prime}(\tilde{x}) = \tilde{f}^{\prime}(\tilde{x}) \oplus \tilde{g}^{\prime}(\tilde{x})$
\item [(b)] $(\tilde{f} \ominus \tilde{g})^{\prime}(\tilde{x}) = \tilde{f}^{\prime}(\tilde{x}) \ominus \tilde{g}^{\prime}(\tilde{x})$.
\end{enumerate}
\end{theorem}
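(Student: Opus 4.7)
The plan is to verify both identities cut-by-cut using $\alpha$-level representations, since every object appearing in the statement---fuzzy sum, fuzzy difference, and the derivative itself---is specified through its $\alpha$-cuts. Fix $\alpha \in [0,1]$ and write $[\tilde f(\tilde x)]_\alpha = [f_1(x,\alpha), f_2(x,\alpha)]$ and $[\tilde g(\tilde x)]_\alpha = [g_1(x,\alpha), g_2(x,\alpha)]$; by hypothesis each $f_i, g_i$ is a differentiable composite function in $x$, so the chain-rule formula from part (ii) of the differentiability definition applies term-by-term.

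For part (a), I would first invoke Definition \ref{def2} to read off the $\alpha$-level functions of $\tilde h := \tilde f \oplus \tilde g$ as $h_1 = f_1 + g_1$ and $h_2 = f_2 + g_2$. Linearity of the chain-rule expression then gives $h_1' = f_1' + g_1'$ and $h_2' = f_2' + g_2'$, and part (iii) yields $[\tilde h'(\tilde x)]_\alpha = \bigl[\min\{h_1',h_2'\},\max\{h_1',h_2'\}\bigr]$. This should be matched against $[\tilde f'(\tilde x)\oplus \tilde g'(\tilde x)]_\alpha = \bigl[\min\{f_1',f_2'\}+\min\{g_1',g_2'\},\ \max\{f_1',f_2'\}+\max\{g_1',g_2'\}\bigr]$. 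Part (b) proceeds in the same way after using the difference formula $[\tilde f(\tilde x)\ominus \tilde g(\tilde x)]_\alpha = [f_1 - g_2,\ f_2 - g_1]$, whose level-derivatives are $f_1' - g_2'$ and $f_2' - g_1'$, to be compared with $[\tilde f'(\tilde x)\ominus \tilde g'(\tilde x)]_\alpha$.

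The main obstacle in both parts is the algebraic identity $\min\{a+c,b+d\}=\min\{a,b\}+\min\{c,d\}$ (and its max analogue), which is \emph{not} valid in general but does hold whenever $(a,b)$ and $(c,d)$ are similarly ordered. I would resolve this by arguing that at the fixed point $\tilde x$ the sign of $f_1' - f_2'$ (respectively $g_1' - g_2'$) is consistent across $\alpha$, because $\tilde f'(\tilde x)$ and $\tilde g'(\tilde x)$ are assumed to be fuzzy numbers and hence possess nested compact $\alpha$-cuts; thus the same one of $f_1', f_2'$ plays the role of the lower endpoint for every $\alpha$, and similarly for $g$. Granted this coherence, the min/max identities line up term-by-term, establishing equality of the $\alpha$-level sets; the decomposition theorem from Section \ref{section2} then upgrades this to the fuzzy-number identities (a) and (b). Differentiability of $\tilde f \oplus \tilde g$ and $\tilde f \ominus \tilde g$ at $\tilde x$ follows because conditions (i)--(iii) of the differentiability definition are inherited from those of $\tilde f$ and $\tilde g$: (i) transfers via the fact that $\tilde x$ is the common input, (ii) via linearity of the partial-derivative expression, and (iii) via closure of $F(\mathbb{R})$ under $\oplus$ and $\ominus$ applied to the derivative fuzzy numbers.
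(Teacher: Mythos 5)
Your proposal follows the same route as the paper's proof: pass to $\alpha$-level functions, use linearity of the chain-rule expression to get $(f+g)_i' = f_i' + g_i'$, and then split the resulting $\min/\max$ interval into the sum of the two derivative intervals. Where you go beyond the paper is in flagging the splitting identity $\min\{a+c,\,b+d\}=\min\{a,b\}+\min\{c,d\}$ as the real crux: the paper simply writes this equality as one line of its chain of displayed equations and never justifies it. You are right that it is the main obstacle. However, your proposed resolution does not close the gap. The identity requires that the pairs $(f_1',f_2')$ and $(g_1',g_2')$ be \emph{similarly ordered at each fixed $\alpha$}, i.e.\ $\bigl(f_1'-f_2'\bigr)\bigl(g_1'-g_2'\bigr)\ge 0$; what you argue instead is that \emph{within each function separately} the same index supplies the lower endpoint for every $\alpha$. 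Even granting that (which itself does not follow from nestedness of the $\alpha$-cuts, since the labels $f_1'$ and $f_2'$ may swap roles as $\alpha$ varies while the intervals $[\min,\max]$ remain nested), it is perfectly possible that $f_1'\le f_2'$ while $g_2'\le g_1'$ at the same $\alpha$. Concretely, if at some $\alpha$ one has $f_1'=0$, $f_2'=1$, $g_1'=1$, $g_2'=0$, then
\begin{equation*}
\bigl[\min\{f_1'+g_1',\,f_2'+g_2'\},\ \max\{f_1'+g_1',\,f_2'+g_2'\}\bigr]=[1,1]
\quad\text{but}\quad
[0,1]+[0,1]=[0,2],
\end{equation*}
so the asserted equality of level sets fails. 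The paper's own examples show both orderings occur in practice: for $\tilde f(\tilde x)=\tilde x\otimes\tilde x$ with $x\ge 1$ one gets $f_1'=2x_1\le 2x_2=f_2'$, while for $\tilde g(\tilde x)=\exp(\widetilde{-x})$ one gets $g_1'=-\exp(x_1)\ge -\exp(x_2)=g_2'$, so adding these two functions produces exactly the problematic configuration.

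In short: your write-up is an honest reconstruction of the paper's argument and correctly isolates its unproved step, but the coherence argument you offer does not supply the missing cross-function comonotonicity, and without an added hypothesis of that kind the formula in part (a) (and likewise (b)) is not valid in general. To make the proof go through you would need to either (i) add the hypothesis that $f_1'-f_2'$ and $g_1'-g_2'$ have the same sign for every $\alpha$ (automatic, for instance, if both level derivatives are always ordered as $f_1'\le f_2'$ and $g_1'\le g_2'$), or (ii) weaken the conclusion to an inclusion of level sets rather than an equality.
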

\begin{proof} We prove part (a). Part (b) can be proved similarly. \\ \\
Since the function $\tilde{f}$ is differentiable at $\tilde{x}$, by the definition of derivative of the fuzzy-valued function $\tilde{f}$ we have
\begin{eqnarray*}
[\tilde{f}^{\prime}(\tilde{x})]_{\alpha} = [\min \{f_{1}^{\prime}(x_{1}, x_{2}, \alpha), f_{2}^{\prime}(x_{1}, x_{2}, \alpha)\}, \max\{ f_{1}^{\prime}(x_{1}, x_{2}, \alpha), f_{2}^{\prime}(x_{1}, x_{2}, \alpha)\}], ~\alpha \in [0,1].
\end{eqnarray*}
The derivatives $f_{1}^{\prime}(x_{1}, x_{2}, \alpha)$ and $f_{2}^{\prime}(x_{1}, x_{2}, \alpha)$ are given by
\begin{eqnarray*}
f_{1}^{\prime}(x_{1}, x_{2}, \alpha) = {{\partial f_{1}} \over {\partial x_{1}}} {{d x_{1}} \over {d x}} +  {{\partial f_{1}} \over {\partial x_{2}}} {{d x_{2}} \over {d x}}
\end{eqnarray*}
and
\begin{eqnarray*}
f_{2}^{\prime}(x_{1}, x_{2}, \alpha) = {{\partial f_{2}} \over {\partial x_{1}}} {{d x_{1}} \over {d x}} +  {{\partial f_{2}} \over {\partial x_{2}}} {{d x_{2}} \over {d x}}.
\end{eqnarray*}
The derivative $\tilde{g}^{\prime}(\tilde{x})$ of $\tilde{g}$ is given by
\begin{eqnarray*}
[\tilde{g}^{\prime}(\tilde{x})]_{\alpha} = [\min\{ g_{1}^{\prime}(x_{1}, x_{2}, \alpha), g_{2}^{\prime}(x_{1}, x_{2}, \alpha)\}, \max\{g_{1}^{\prime}(x_{1}, x_{2}, \alpha), g_{2}^{\prime}(x_{1}, x_{2}, \alpha)\}], ~\alpha \in [0,1].
\end{eqnarray*}
The derivatives $g_{1}^{\prime}(x_{1}, x_{2}, \alpha)$ and $g_{2}^{\prime}(x_{1}, x_{2}, \alpha)$ are
\begin{eqnarray*}
g_{1}^{\prime}(x_{1}, x_{2}, \alpha) = {{\partial g_{1}} \over {\partial x_{1}}} {{d x_{1}} \over {d x}} +  {{\partial g_{1}} \over {\partial x_{2}}} {{d x_{2}} \over {d x}}
\end{eqnarray*}
and
\begin{eqnarray*}
g_{2}^{\prime}(x_{1}, x_{2}, \alpha) = {{\partial g_{2}} \over {\partial x_{1}}} {{d x_{1}} \over {d x}} +  {{\partial g_{2}} \over {\partial x_{2}}} {{d x_{2}} \over {d x}}.
\end{eqnarray*}
Now the derivative of $\tilde{f} \oplus \tilde{g}$ at $\tilde{x}$ can be presented as
\begin{equation}\label{eq1}
[(\tilde{f} \oplus \tilde{g})^{\prime}(\tilde{x})]_{\alpha} =  [\min\{(\tilde{f} + \tilde{g})_{1}^{\prime}(x_{1}, x_{2}, \alpha), (\tilde{f} + \tilde{g})_{2}^{\prime}(x_{1}, x_{2}, \alpha)\}, \max\{(\tilde{f} + \tilde{g})_{1}^{\prime}(x_{1}, x_{2}, \alpha), (\tilde{f} + \tilde{g})_{2}^{\prime}(x_{1}, x_{2}, \alpha)\}], ~\alpha \in [0,1]
\end{equation}
provided that the equation defines a fuzzy number and the derivatives  $(\tilde{f} + \tilde{g})_{1}^{\prime}(x_{1}, x_{2}, \alpha)$ and $(\tilde{f} + \tilde{g})_{2}^{\prime}(x_{1}, x_{2}, \alpha)$ exist. \\
We note that
\begin{eqnarray*}
(\tilde{f} + \tilde{g})_{1}^{\prime}(x_{1}, x_{2}, \alpha) & = & {{\partial (\tilde{f} +\tilde{g})_{1}} \over {\partial x_{1}}} {{d x_{1}} \over {d x}} +  {{\partial (\tilde{f} + \tilde{g})_{1}} \over {\partial x_{2}}} {{d x_{2}} \over {d x}} \\
& = & \Big ( {{\partial f_{1}} \over {\partial x_{1}}} + {{\partial g_{1}} \over {\partial x_{1}}} \Big )  {{d x_{1}} \over {d x}} + \Big ( {{\partial f_{1}} \over {\partial x_{2}}} + {{\partial g_{1}} \over {\partial x_{2}}} \Big )  {{d x_{2}} \over {d x}} \\
& = & \Big ( {{\partial f_{1}} \over {\partial x_{1}}} {{d x_{1}} \over {d x}} + {{\partial f_{1}} \over {\partial x_{2}}} {{d x_{2}} \over {d x}} \Big ) + \Big ( {{\partial g_{1}} \over {\partial x_{1}}} {{d x_{1}} \over {d x}} + {{\partial g_{1}} \over {\partial x_{2}}} {{d x_{2}} \over {d x}} \Big ) \\
& = & f_{1}^{\prime}(x_{1}, x_{2}, \alpha) + g_{1}^{\prime}(x_{1}, x_{2}, \alpha)
\end{eqnarray*}
Thus,
\begin{equation}\label{eq11}
(\tilde{f} + \tilde{g})_{1}^{\prime}(x_{1}, x_{2}, \alpha)  =  f_{1}^{\prime}(x_{1}, x_{2}, \alpha) + g_{1}^{\prime}(x_{1}, x_{2}, \alpha).
\end{equation}
Similarly, we have
\begin{equation}\label{eq12}
(\tilde{f} + \tilde{g})_{2}^{\prime}(x_{1}, x_{2}, \alpha) = f_{2}^{\prime}(x_{1}, x_{2}, \alpha) + g_{2}^{\prime}(x_{1}, x_{2}, \alpha).
\end{equation}
Substituting (\ref{eq11}) and (\ref{eq12}), in equation (\ref{eq1}), we have

\begin{eqnarray*}
[(\tilde{f} \oplus \tilde{g})^{\prime}(\tilde{x})]_{\alpha} 
& = & [\min\{(\tilde{f} + \tilde{g})_{1}^{\prime}(x_{1}, x_{2}, \alpha), (\tilde{f} + \tilde{g})_{2}^{\prime}(x_{1}, x_{2}, \alpha)\}, \max\{(\tilde{f} + \tilde{g})_{1}^{\prime}(x_{1}, x_{2}, \alpha), (\tilde{f} + \tilde{g})_{2}^{\prime}(x_{1}, x_{2}, \alpha)\}], \\
& = & [\min\{ f_{1}^{\prime}(x_{1}, x_{2}, \alpha) + g_{1}^{\prime}(x_{1}, x_{2}, \alpha), f_{2}^{\prime}(x_{1}, x_{2}, \alpha) + g_{2}^{\prime}(x_{1}, x_{2}, \alpha)\}, \\
&  & \max\{ f_{1}^{\prime}(x_{1}, x_{2}, \alpha) + g_{1}^{\prime}(x_{1}, x_{2}, \alpha), f_{2}^{\prime}(x_{1}, x_{2}, \alpha) + g_{2}^{\prime}(x_{1}, x_{2}, \alpha) \}],\\
& = & [ \min\{f_{1}^{\prime}(x_{1}, x_{2}, \alpha), f_{2}^{\prime}(x_{1}, x_{2}, \alpha)\}, \max\{ f_{1}^{\prime}(x_{1}, x_{2}, \alpha), f_{2}^{\prime}(x_{1}, x_{2}, \alpha) \}] + \\
&  & [ \min\{g_{1}^{\prime}(x_{1}, x_{2}, \alpha), g_{2}^{\prime}(x_{1}, x_{2}, \alpha)\}, \max\{ g_{1}^{\prime}(x_{1}, x_{2}, \alpha), g_{2}^{\prime}(x_{1}, x_{2}, \alpha) \}]
\end{eqnarray*}
Therefore, we have $(\tilde{f} \oplus \tilde{g})^{\prime}(\tilde{x}) = \tilde{f}^{\prime}(\tilde{x}) \oplus \tilde{g}^{\prime}(\tilde{x})$. \qed
\end{proof}

\begin{theorem}
Assume that $\tilde{f}: F(\mathbb{R}) \to F(\mathbb{R})$ is differentiable at $\tilde{x}$. Then, for any $k \in \mathbb{R}$, $k \odot \tilde{f}$ is also differentiable at $\tilde{x}$ and $(k \odot \tilde{f})^\prime(\tilde{x}) = k \odot \tilde{f}^{\prime}(\tilde{x})$.
\end{theorem}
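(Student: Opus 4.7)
The plan is to imitate the structure of the proof for the sum/difference theorem, but now with a case split on the sign of $k$, because Definition~\ref{def2} tells us that scalar multiplication by a negative number interchanges the two endpoints of every $\alpha$-level interval. First, I would write the $\alpha$-level functions of $k\odot\tilde{f}$ in terms of those of $\tilde{f}$. For $k \ge 0$, we have $(k\odot\tilde{f})_{1}(x_{1},x_{2},\alpha)=k f_{1}(x_{1},x_{2},\alpha)$ and $(k\odot\tilde{f})_{2}(x_{1},x_{2},\alpha)=k f_{2}(x_{1},x_{2},\alpha)$, while for $k<0$ the two roles are swapped: $(k\odot\tilde{f})_{1}=k f_{2}$ and $(k\odot\tilde{f})_{2}=k f_{1}$.

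Next, since $f_{1}$ and $f_{2}$ are differentiable by hypothesis and the chain-rule formulas of the differentiability definition are $\mathbb{R}$-linear in the outer function, I can differentiate each case term-by-term to obtain
\[
(k\odot\tilde{f})_{1}^{\prime}=kf_{1}^{\prime},\quad (k\odot\tilde{f})_{2}^{\prime}=kf_{2}^{\prime}\quad\text{if }k\ge 0,
\]
and
\[
(k\odot\tilde{f})_{1}^{\prime}=kf_{2}^{\prime},\quad (k\odot\tilde{f})_{2}^{\prime}=kf_{1}^{\prime}\quad\text{if }k<0.
\]
This establishes conditions (i) and (ii) of the differentiability definition for $k\odot\tilde{f}$ (the differentiability of $x_{1}(x,\alpha)$, $x_{2}(x,\alpha)$ is already assumed and carries over unchanged).

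The key identification is then a routine min/max bookkeeping, exploiting the fact that multiplying a pair of reals by $k$ preserves their order when $k\ge 0$ and reverses it when $k<0$. In either case one gets
\[
\bigl[\min\{(k\odot\tilde{f})_{1}^{\prime},(k\odot\tilde{f})_{2}^{\prime}\},\ \max\{(k\odot\tilde{f})_{1}^{\prime},(k\odot\tilde{f})_{2}^{\prime}\}\bigr]
=\bigl(k\odot[\tilde{f}^{\prime}(\tilde{x})]\bigr)_{\alpha},
\]
where the right-hand side is computed from the $\alpha$-level set of $\tilde{f}^{\prime}(\tilde{x})$ via Definition~\ref{def2}. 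Condition (iii) of the differentiability definition (that the union of these intervals forms a fuzzy number) is inherited from $\tilde{f}^{\prime}(\tilde{x})$: scalar multiplication maps $F(\mathbb{R})$ into itself, so if $\tilde{f}^{\prime}(\tilde{x})\in F(\mathbb{R})$, then $k\odot\tilde{f}^{\prime}(\tilde{x})\in F(\mathbb{R})$ as well. Applying the decomposition theorem then yields $(k\odot\tilde{f})^{\prime}(\tilde{x})=k\odot\tilde{f}^{\prime}(\tilde{x})$.

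The main obstacle I anticipate is purely notational rather than conceptual: one must carefully track how the labels ``$1$'' and ``$2$'' for the endpoints swap under negative scaling, so that the final identification of $[\min\{\cdot,\cdot\},\max\{\cdot,\cdot\}]$ with the $\alpha$-level set of $k\odot\tilde{f}^{\prime}(\tilde{x})$ is valid in both the $k\ge 0$ and $k<0$ branches. The case $k=0$ deserves a brief separate mention, since $k\odot\tilde{f}$ collapses to the crisp zero and its derivative is trivially $0=k\odot\tilde{f}^{\prime}(\tilde{x})$.
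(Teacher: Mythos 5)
Your proposal is correct and follows essentially the same route as the paper: express the $\alpha$-level functions of $k\odot\tilde{f}$ as $k$ times those of $\tilde{f}$, differentiate via the chain-rule formulas, and factor $k$ out of the $[\min,\max]$ interval. In fact your treatment is more complete than the paper's, which only writes out the case $k>0$ and never addresses the endpoint swap for $k<0$ (or the trivial case $k=0$) that you handle explicitly.
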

\begin{proof}
Note that for $k>0$, the derivative of $k \odot \tilde{f} $ exists at $\tilde{x}$ if

\begin{equation}\label{eq2}
[(k \odot \tilde{f})^{\prime}(\tilde{x})]_{\alpha} = \left[\min\{(k\tilde{f})_{1}^{\prime}(x_{1}, x_{2}, \alpha), (k\tilde{f})_{2}^{\prime}(x_{1}, x_{2}, \alpha)\}, \max\{(k\tilde{f})_{1}^{\prime}(x_{1}, x_{2}, \alpha), (k\tilde{f})_{2}^{\prime}(x_{1}, x_{2}, \alpha)\} \right], ~\alpha \in [0,1]
\end{equation}
defines a fuzzy number. \\
We see that
\begin{eqnarray*}
(k\tilde{f})_{1}^{\prime}(x_{1}, x_{2}, \alpha) & = & {{\partial (k\tilde{f})_{1}} \over {\partial x_{1}}} {{d x_{1}} \over {d x}} +  {{\partial (k\tilde{f})_{1}} \over {\partial x_{2}}} {{d x_{2}} \over {d x}} \\
& = & k \Big ( {{\partial f_{1}} \over {\partial x_{1}}} {{d x_{1}} \over {d x}} + {{\partial f_{1}} \over {\partial x_{2}}}   {{d x_{2}} \over {d x}} \Big ) \\
& = & k \tilde{f}_{1}^{\prime}(x_{1}, x_{2}, \alpha).
\end{eqnarray*}
That is,
\begin{equation}\label{eq21}
(k\tilde{f})_{1}^{\prime}(x_{1}, x_{2}, \alpha)  = k f_{1}^{\prime}(x_{1}, x_{2}, \alpha).
\end{equation}
Similarly,
\begin{equation}\label{eq22}
(k\tilde{f})_{2}^{\prime}(x_{1}, x_{2}, \alpha) = k f_{2}^{\prime}(x_{1}, x_{2}, \alpha).
\end{equation}
Substituting (\ref{eq21}) and (\ref{eq22}), in equation (\ref{eq2}), we have  

\begin{eqnarray*}
[(k \odot \tilde{f})^{\prime}(\tilde{x})]_{\alpha} 
& = & [\min\{(k\tilde{f})_{1}^{\prime}(x_{1}, x_{2}, \alpha), (k\tilde{f})_{2}^{\prime}(x_{1}, x_{2}, \alpha)\}, \max\{(k\tilde{f})_{1}^{\prime}(x_{1}, x_{2}, \alpha), (k\tilde{f})_{2}^{\prime}(x_{1}, x_{2}, \alpha)\}], \\
& = & [\min\{ k f_{1}^{\prime}(x_{1}, x_{2}, \alpha), k f_{2}^{\prime}(x_{1}, x_{2}, \alpha) \}, \max\{ k f_{1}^{\prime}(x_{1}, x_{2}, \alpha), k f_{2}^{\prime}(x_{1}, x_{2}, \alpha) \}] \\
& = & k [ \min\{f_{1}^{\prime}(x_{1}, x_{2}, \alpha), f_{2}^{\prime}(x_{1}, x_{2}, \alpha)\}, \max\{f_{1}^{\prime}(x_{1}, x_{2}, \alpha), f_{2}^{\prime}(x_{1}, x_{2}, \alpha) \} ]
\end{eqnarray*}
Hence, we have $(k \odot \tilde{f})^{\prime}(\tilde{x}) = k \odot \tilde{f}^{\prime}(\tilde{x})$. \qed
\end{proof}



\subsection{Second order differentiability}
We define second order differentiability of fuzzy function as follows.
\begin{definition} \label{def3.4}
Let $\tilde{f}: F(\mathbb{R}) \to F(\mathbb{R})$ be a fuzzy function. The second derivative $\tilde{f}^{\prime\prime}(\tilde{x})$ of $\tilde{f}$ is defined by
\begin{eqnarray*}
[\tilde{f}^{\prime\prime}(\tilde{x})]_{\alpha} = [\min\{f_{1}^{\prime\prime}(x_{1}, x_{2}, \alpha), f_{2}^{\prime\prime}(x_{1}, x_{2}, \alpha)\}, \max\{f_{1}^{\prime\prime}(x_{1}, x_{2}, \alpha), f_{2}^{\prime\prime}(x_{1}, x_{2}, \alpha)\}], ~\alpha \in [0,1]
\end{eqnarray*}
provided that the equation defines a fuzzy number. The derivatives $f_{1}^{\prime\prime}(x_{1}, x_{2}, \alpha)$ and $f_{2}^{\prime\prime}(x_{1}, x_{2}, \alpha)$ are defined as follows:
\begin{eqnarray*}
f_{1}^{\prime\prime}(x_{1}, x_{2}, \alpha) & = & {{d} \over {dx}} \Big( {{\partial f_{1}} \over {\partial x_{1}}} \Big) {{d x_{1}} \over {d x}} + {{\partial f_{1}} \over {\partial x_{1}}} {{d^2 x_{1}} \over {dx^2}}  +  {{d} \over {dx}} \Big ( {{\partial f_{1}} \over {\partial x_{2}}}\Big) {{d x_{2}} \over {d x}} + {{\partial f_{1}} \over {\partial x_{2}}} {{d^2 x_{2}} \over {dx^2}} \\
& = & \Big( {{\partial^2 f_{1}} \over {\partial x_{1}^2}} {{d x_{1}} \over {d x}} + {{\partial^2 f_{1}} \over {\partial x_{2}\partial x_{1}}} {{d x_{2}} \over {d x}} \Big) {{d x_{1}} \over {d x}} + {{\partial f_{1}} \over {\partial x_{1}}} {{d^2 x_{1}} \over {dx^2}} \\
& ~~~~~+ & \Big( {{\partial^2 f_{1}} \over {\partial x_{1} \partial x_{2}}} {{d x_{1}} \over {d x}} + {{\partial^2 f_{1}} \over {\partial x_{2}^2}} {{d x_{2}} \over {d x}} \Big) {{d x_{2}} \over {d x}} + {{\partial f_{1}} \over {\partial x_{2}}} {{d^2 x_{2}} \over {dx^2}}
\end{eqnarray*}
and
\begin{eqnarray*}
f_{2}^{\prime\prime}(x_{1}, x_{2}, \alpha) & = & {{d} \over {dx}} \Big( {{\partial f_{2}} \over {\partial x_{1}}} \Big) {{d x_{1}} \over {d x}} + {{\partial f_{2}} \over {\partial x_{1}}} {{d^2 x_{1}} \over {dx^2}}  +  {{d} \over {dx}} \Big ( {{\partial f_{2}} \over {\partial x_{2}}}\Big) {{d x_{2}} \over {d x}} + {{\partial f_{2}} \over {\partial x_{2}}} {{d^2 x_{2}} \over {dx^2}} \\
& = & \Big( {{\partial^2 f_{2}} \over {\partial x_{1}^2}} {{d x_{1}} \over {d x}} + {{\partial^2 f_{2}} \over {\partial x_{2}\partial x_{1}}} {{d x_{2}} \over {d x}} \Big) {{d x_{1}} \over {d x}} + {{\partial f_{2}} \over {\partial x_{1}}} {{d^2 x_{1}} \over {dx^2}} \\
& ~~~~~+ & \Big( {{\partial^2 f_{2}} \over {\partial x_{1} \partial x_{2}}} {{d x_{1}} \over {d x}} + {{\partial^2 f_{2}} \over {\partial x_{2}^2}} {{d x_{2}} \over {d x}} \Big) {{d x_{2}} \over {d x}} + {{\partial f_{2}} \over {\partial x_{2}}} {{d^2 x_{2}} \over {dx^2}}.
\end{eqnarray*}
\end{definition}

\begin{remark}
The second derivative of the fuzzy function $\tilde{f}$ exists if
\begin{itemize}
\item [(i)] both $f_{1}(x_{1}, x_{2}, \alpha)$ and $f_{2}(x_{1}, x_{2}, \alpha)$, for $\alpha \in [0,1]$, have continuous second order partial derivatives, and
\item [(ii)]the functions $x_{1}(x,\alpha) $ and $x_{2}(x,\alpha)$, for $\alpha \in [0,1]$, are twice differentiable with respect to $x$.
\end{itemize}
\end{remark}
\begin{definition}
A fuzzy function $\tilde{f}: F(\mathbb{R}) \to F(\mathbb{R})$ is \textit{twice continuously differentiable} if its first and second derivatives are continuous.
\end{definition}
\begin{example}
Let $\tilde{f}$ be a fuzzy function defined by $\tilde{f}(\tilde{x}) = \exp(\widetilde{-x})$, where $\widetilde{-x}$ is the triangular fuzzy number $ (-x-1, -x, -x+1)$. The $\alpha$-level numbers of $ \widetilde{-x}$ are
\[x_{1}(x,\alpha) = (1-\alpha)(-x-1) - \alpha x = -x - 1 + \alpha
\] and
\[
x_{2}(x,\alpha) = (1-\alpha)(-x+1) - \alpha x = -x + 1 - \alpha,\]
for $\alpha \in [0,1]$ and $\alpha$-level functions $f_{1}(x_{1}, x_{2}, \alpha)$ and $f_{2}(x_{1}, x_{2},\alpha)$ are
\[
f_{1}(x_{1}, x_{2}, \alpha) = \exp(x_{1}(x,\alpha))
\]
and
\[
f_{2}(x_{1}, x_{2}, \alpha) = \exp(x_{2}(x,\alpha)),
\]
for $\alpha \in [0,1]$. Using Definition \ref{def3.4},
\begin{eqnarray*}
f_{1}^{\prime \prime }(x_{1}, x_{2}, \alpha) = \exp(x_{1}(x,\alpha)),
\end{eqnarray*}
and
\begin{eqnarray*}
f_{2}^{\prime \prime }(x_{1}, x_{2}, \alpha) = \exp(x_{2}(x,\alpha)).
\end{eqnarray*}
The $\alpha$-level sets
\begin{eqnarray*}
[\tilde{f}^{\prime\prime}(\tilde{x})]_{\alpha} = [ \exp(x_{1}(x,\alpha)), \exp(x_{2}(x,\alpha))],
\end{eqnarray*}
$\alpha \in [0,1]$ defines a fuzzy number for each $x$. Therefore, the second derivative of $\tilde{f}(\tilde{x}) = \exp(\widetilde{-x})$ is $\tilde{f}^{\prime\prime}(\tilde{x}) = \exp(\widetilde{-x})$.
\end{example}


\section{Fuzzy optimization}\label{section4}
A partial order relation on $F(\mathbb{R})$ is defined as follows.
\begin{definition}
For $\tilde{a}, \tilde{b} \in F(\mathbb{R})$, we say that $\tilde{a}$ dominates $\widetilde{b}$, written as $\tilde{a}\preceq \tilde{b}$, if $a_{1}(\alpha) \leq b_{1}(\alpha)$ and $a_{2}(\alpha) \leq b_{2}(\alpha)$, for all $\alpha$. We say that $\tilde{a}$ strictly dominates $\widetilde{b}$, written as $\tilde{a}\prec \tilde{b}$, if $a_{1}(\alpha) \leq b_{1}(\alpha)$ and $a_{2}(\alpha) \leq b_{2}(\alpha)$, for all $\alpha$ and there exists an $\alpha_{0} \in [0,1]$ such that $a_{1}(\alpha_{0}) < b_{1}(\alpha_{0})$ or $a_{2}(\alpha_{0}) < b_{2}(\alpha_{0})$.
\end{definition}
We now define a subset of $F(\mathbb{R})$.
\begin{definition}
A set $X$ is called a subset of $F(\mathbb{R})$ if all the elements of $X$ are also elements of $F(\mathbb{R})$. It is denoted by $X \subset F(\mathbb{R})$.
\end{definition}
\begin{example}
Let $X = \{ \tilde{y}: \tilde{y} = 2\odot \tilde{x}, \tilde{x} \in F(\mathbb{R})\}$. Clearly, each element of $X$ is also an element of $F(\mathbb{R})$.  Therefore, $X \subset F(\mathbb{R})$.
\end{example}
\begin{definition} (\emph{Open ball}).
Let $((F(\mathbb{R}), d_{F})$ be a metric space, $\tilde{x}^{*} \in F(\mathbb{R})$ and $\epsilon > 0$.The fuzzy open ball centered at $\tilde{x}^{*}$ with radius $\epsilon$ is defined to be the set $B(\tilde{x}^{*},\epsilon) = \{ \tilde{x} \in F(\mathbb{R}) : d_{F}(\tilde{x}, \tilde{x}^{*}) < \epsilon \}$.
\end{definition}
\begin{definition}(\textit{Interior point}).
Let $X$ be a subset of the metric space $F(\mathbb{R})$ with metric $d_{F}$. A point $\tilde{x}^{*} \in F(\mathbb{R})$ is called an interior point of $X$ if there exists an $\epsilon > 0$, such that the fuzzy open ball $B(\tilde{x}^{*},\epsilon) \subseteq X$.
\end{definition}
$\\\\$
Let $\tilde{f}: X \subseteq F(\mathbb{R}) \to F(\mathbb{R})$ be a fuzzy function. Consider the \textit{fuzzy optimization problem} (FOP): \\
\[
\text{Minimize}~~\tilde{f}(\tilde{x}),~\tilde{x} \in X.
\]
\begin{definition}
Let $\tilde{x}^{*} \in F(\mathbb{R})$.
\begin{enumerate}
\item The point $\tilde{x}^{*}$ is called a \textit{locally non-dominated solution} of (FOP) if there is no $\tilde{x} \in B(\tilde{x}^{*},~\epsilon) \cap X$ such that $\tilde{f}(\tilde{x}) \prec \tilde{f}(\tilde{x}^{*})$.
\item The point $\tilde{x}^{*} \in F(\mathbb{R})$ is called a \textit{non-dominated solution} of (FOP) if there is no $\tilde{x}\neq \tilde{x}^{*} \in X$ such that $\tilde{f}(\tilde{x}) \prec \tilde{f}(\tilde{x}^{*})$.
\end{enumerate}
\end{definition}

\subsection{Necessary condition for optimality}
\begin{theorem}\label{thm3}
Let $\tilde{f}: X \subseteq F(\mathbb{R}) \to F(\mathbb{R})$ be differentiable at $\tilde{x}^{*} \in X$, where $\tilde{x}^{*}$ is an interior point of $X$. If $\tilde{x}^{*}$ is a locally non-dominated solution of $\tilde{f}$ then there exists an $\alpha \in [0, 1]$ such that 

\[f_{1}^{\prime}\left(x_{1}(x^{*},\alpha), x_{2}(x^{*},\alpha),\alpha\right) = 0 \] 
or \[ f_{2}^{\prime}\left(x_{2}(x^{*},\alpha), x_{2}(x^{*},\alpha),\alpha\right) = 0 \]
where $x^{*}$ is real number corresponding to the fuzzy number $\tilde{x}^{*}$.
\end{theorem}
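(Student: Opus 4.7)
The plan is to mimic the classical Fermat necessary condition by contradiction, using the one-parameter family $x \mapsto \tilde{x}$ whose $\alpha$-cuts are $[x_1(x, \alpha), x_2(x, \alpha)]$. Write $F_i(\alpha) := f_i^{\prime}(x_1(x^*,\alpha), x_2(x^*,\alpha), \alpha)$ for $i \in \{1, 2\}$ and $\alpha \in [0, 1]$, computed via the chain rule as in the differentiability definition. Suppose for contradiction that $\tilde{x}^*$ is locally non-dominated while $F_1(\alpha) \ne 0$ and $F_2(\alpha) \ne 0$ for every $\alpha \in [0, 1]$. The aim is to exhibit a scalar $t$ of small absolute value such that the fuzzy number $\tilde{x}$ corresponding to $x = x^* + t$ satisfies $\tilde{f}(\tilde{x}) \prec \tilde{f}(\tilde{x}^*)$, contradicting the definition of local non-domination through the open ball $B(\tilde{x}^*, \epsilon)$.

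First I would apply the chain-rule expansion $f_i(x^* + t, \alpha) - f_i(x^*, \alpha) = t \, F_i(\alpha) + R_i(t, \alpha)$, where $R_i(t, \alpha)/t \to 0$ as $t \to 0$, uniformly in $\alpha$ on the compact set $[0, 1]$. The uniformity is inherited from continuity in $\alpha$ of the partial derivatives $\partial f_i/\partial x_j$ together with continuity of $x_1(\cdot, \alpha), x_2(\cdot, \alpha)$, guaranteed by the hypotheses of Definition of differentiability and by the continuity theorem proved earlier in Section~3. Next, using continuity of $\alpha \mapsto F_i(\alpha)$ and the hypothesis that $F_i$ never vanishes, the intermediate value theorem on $[0,1]$ forces each $F_i$ to have constant sign on $[0, 1]$. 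Finally, invoking the coincidence $F_1(1) = F_2(1)$ at the core of $\tilde{x}^*$ -- where $x_1(x, 1) = x_2(x, 1)$ and hence $f_1(\cdot, 1) = f_2(\cdot, 1)$ -- these two constant signs must agree.

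With $F_1$ and $F_2$ sharing a common sign across $[0, 1]$, I would choose $t$ of the opposite sign and $|t|$ small enough that the uniform expansion gives $f_1(x^* + t, \alpha) < f_1(x^*, \alpha)$ and $f_2(x^* + t, \alpha) < f_2(x^*, \alpha)$ simultaneously for every $\alpha \in [0, 1]$, whence $\tilde{f}(\tilde{x}) \prec \tilde{f}(\tilde{x}^*)$ by Definition of strict dominance, yielding the desired contradiction. The main obstacle is the sign-matching step: in a mixed configuration ($F_1 > 0, F_2 < 0$ or the reverse) neither $t > 0$ nor $t < 0$ could produce a dominator. Eliminating this case rests on the structural observation that the derivative fuzzy number $\tilde{f}^{\prime}(\tilde{x}^*)$ has a degenerate $1$-cut, which I would justify from the fuzzy-number axioms of Definition~\ref{def1} combined with continuity of $F_1, F_2$ in $\alpha$. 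A secondary, more routine technicality is establishing the uniform-in-$\alpha$ smallness of the Taylor remainder, which follows directly from the joint continuity of the chain-rule ingredients on $[0, 1]$.
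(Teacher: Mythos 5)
Your overall strategy---argue by contradiction, use a first-order descent step at $x^*$, and make the step length uniform over $\alpha\in[0,1]$---is the same as the paper's: the paper takes $\bar d=-\nabla f_1(x^*,\alpha)$, invokes Theorem 4.1.2 of Bazaraa et al.\ to get $f_1(x^*+\lambda\bar d,\alpha)<f_1(x^*,\alpha)$ for small $\lambda$, argues $\delta_1=\inf_\alpha\xi_1(\alpha)>0$ by continuity in $\alpha$, repeats this for $f_2$, and concludes $\tilde f(\tilde x^*+\lambda\bar d)\prec\tilde f(\tilde x^*)$. To your credit, you have isolated the crux that the paper passes over in silence: a \emph{single} perturbation of $x^*$ must decrease $f_1(\cdot,\alpha)$ and $f_2(\cdot,\alpha)$ simultaneously for every $\alpha$, which (writing $F_i(\alpha)$ for $f_i^{\prime}(x_1(x^*,\alpha),x_2(x^*,\alpha),\alpha)$, as you do) requires $F_1$ and $F_2$ to have a common sign; the paper builds its direction from $\nabla f_1$ alone and asserts the decrease of $f_2$ ``in a similar way'' without checking that the same direction works.

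However, your resolution of that crux does not go through in the paper's framework, so the proof is incomplete exactly at the step you yourself flag as the main obstacle. You deduce the common sign from $F_1(1)=F_2(1)$, justified by a ``degenerate $1$-cut'' and the identity $x_1(x,1)=x_2(x,1)$. Neither is available: Definition 2.1 only requires normality, so the core of a fuzzy number may be a nondegenerate interval; Definition 3.1 allows $\tilde x$ to be any LR-fuzzy number, and Example 3.2 explicitly uses the trapezoidal $\tilde x=(x-1,\,x-1/2,\,x+1/2,\,x+1)$ with $x_1(x,1)=x-1/2\neq x+1/2=x_2(x,1)$; and condition (iii) of the differentiability definition only requires the intervals $\left[\min\{F_1(\alpha),F_2(\alpha)\},\max\{F_1(\alpha),F_2(\alpha)\}\right]$ to assemble into a fuzzy number, which is perfectly consistent with $F_1(1)\neq F_2(1)$ (for instance $\tilde f(\tilde x)=\tilde c\otimes\tilde x$ with $\tilde c$ trapezoidal and $x>0$ gives $F_1(1)\neq F_2(1)$). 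Hence the mixed-sign configuration is not excluded by your argument, and without excluding it the contradiction cannot be completed. A secondary, shared weakness: both your uniform $o(t)$ remainder and the paper's $\inf_\alpha\xi_1(\alpha)>0$ need some equicontinuity of the level derivatives in $\alpha$ beyond the pointwise differentiability that the definition actually supplies.
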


\begin{proof}
Suppose the result is not true. That is, for all $\alpha \in [0, 1]$, 

\[f_{1}^{\prime}(x_{1}(x^{*},\alpha), x_{2}(x^{*},\alpha),\alpha) \neq 0 \text{ and } f_{2}^{\prime}(x_{2}(x^{*},\alpha), x_{2}(x^{*},\alpha),\alpha) \neq 0. \]

Since $\tilde{f}$ is differentiable at $\tilde{x}^{*}$, for any $\alpha \in [0, 1]$ we have 

\[f_{1}^{\prime}(x_{1}(x^{*},\alpha), x_{2}(x^{*},\alpha),\alpha) = {{\partial f_{1}} \over {\partial x_{1}}} {{d x_{1}} \over {d x}} +  {{\partial f_{1}} \over {\partial x_{2}}} {{d x_{2}} \over {d x}}. \]

This implies
\[
\nabla f_{1}(x^{*},\alpha) = \left( {{\partial f_{1}} \over {\partial x_{1}}}, {{\partial \tilde{f}_{1}} \over {\partial x_{2}}} \right) \neq (0, 0). \]

Let $\bar{d} = -\nabla f_{1}(x^{*},\alpha)$. Then we get

\[ \nabla  f_{1}(x^{*},\alpha)^T ~ \bar{d} =  - \| \nabla f_{1}(x^{*},\alpha) \|^{2} < 0. \]

By Theorem 4.1.2 in [2], there is a $\delta_1(\alpha) > 0$ such that

\[ f_{1}(x^{*} + \lambda \bar{d},\alpha) < f_{1}(x^{*},\alpha) \text{ for } \lambda \in (0, \delta_1(\alpha)).\]

Let $\xi_1(\alpha) = \sup \left\{\delta_1(\alpha) : f_{1}(x^{*} + \lambda \bar{d},\alpha) < f_{1}(x^{*},\alpha) \text{ for } \lambda \in (0, \delta_1(\alpha))\right\}$ and $\delta_1 = \inf_{\alpha \in [0, 1]} \xi_1(\alpha)$. \\

We note that $\delta_1 = \displaystyle \inf_{\alpha \in [0, 1]} \left\{\xi_1(\alpha) : f_{1}(x^{*} + \lambda \bar{d},\alpha) - f_{1}(x^{*},\alpha) < 0 \text{ for } \lambda \in (0, \xi_1(\alpha))\right\}$. \\

Since $f_{1}(x^{*} + \lambda \bar{d},\alpha)$ is continuous in $\lambda$ and $\alpha$, evidently, $\xi_1(\alpha)$ is a continuous function on $\alpha$. Hence, $\delta_1 >0$. \\

Thus, for any $\alpha \in [0, 1]$ \[ f_{1}(x_{1}(x^{*} + \lambda \bar{d}, \alpha), x_{2}(x^{*} + \lambda \bar{d},\alpha),\alpha) < f_{1}(x_{1}(x^{*},\alpha), x_{2}(x^{*},\alpha),\alpha) \text{ for } \lambda \in (0, \delta_1).\]

In a similar way there exists $\delta_2 > 0$ such that for all $\alpha \in [0, 1]$,  
\[ f_{2}(x_{1}(x^{*} + \lambda \bar{d}, \alpha), x_{2}(x^{*} + \lambda \bar{d},\alpha),\alpha) < f_{2}(x_{1}(x^{*},\alpha), x_{2}(x^{*},\alpha),\alpha) \text{ for } \lambda \in (0, \delta_2).\]

Letting $\delta = \min \{\delta_1, \delta_2\}$, we see that 
\[
\tilde{f}(\tilde{x}^{*} + \lambda \bar{d}) \prec \tilde{f}(x^{*}) \text{ for } \lambda \in (0,\delta). 
\]

This contradicts to our assumption that $x^{*}$ is a locally non-dominated solution of (FOP).  Hence the result follows. \qed
\end{proof}

\begin{definition}
If $\tilde{x}^{*}$ satisfies necessary condition for optimality, then $\tilde{x}^{*}$ is called a stationary point of the fuzzy function $\tilde{f}$.
\end{definition}

\begin{example}\label{ex42}
Consider a fuzzy-valued function $\tilde{f}: F(\mathbb{R}) \to F(\mathbb{R})$ defined by $\tilde{f}(\tilde{x})= \tilde{x} \otimes \tilde{x} \ominus 4 \odot \tilde{x}$, where $\tilde{x} = (x-1, x, x+1)$ and $x \geq 1$. For each $\alpha \in [0,1]$, the $\alpha$-level functions of given fuzzy function are
\begin{eqnarray*}
f_{1}\left(x_{1}(x,\alpha), x_{2}(x, \alpha),\alpha\right)
& = & x_{1}^{2}(x,\alpha) - 4 x_{2}(x, \alpha) \\
& = & ((1-\alpha)(x-1) + \alpha x)^2 - 4 ((1-\alpha)(x+1) + \alpha x),
\end{eqnarray*}
and
\begin{eqnarray*}
f_{2}(x_{1}\left(x,\alpha), x_{2}(x, \alpha),\alpha\right)
& = & x_{2}^{2}(x,\alpha) - 4 x_{1}(x, \alpha) \\
& = & ((1-\alpha)(x+1) + \alpha x)^2 - 4 ((1-\alpha)(x-1) + \alpha x).
\end{eqnarray*}
The derivatives of level functions are
\begin{eqnarray*}
f_{1}^{\prime}(x_{1}, x_{2}, \alpha) = {{\partial f_{1}} \over {\partial x_{1}}} {{d x_{1}} \over {d x}} +  {{\partial \tilde{f}_{1}} \over {\partial x_{2}}} {{d x_{2}} \over {d x}} = {{\partial f_{1}} \over {\partial x_{1}}} ((1-\alpha) + \alpha) + (-4) =   2 x_{1} -4,
\end{eqnarray*}
as ${{\partial f_{1}} \over {\partial x_{1}}} = 2 x_{1} $ and ${{\partial f_{1}} \over {\partial x_{2}}} = -4$, and
\begin{eqnarray*}
f_{2}^{\prime}(x_{1}, x_{2}, \alpha) = {{\partial f_{2}} \over {\partial x_{1}}} {{d x_{1}} \over {d x}} +  {{\partial f_{2}} \over {\partial x_{2}}} {{d x_{2}} \over {d x}} = (-4) + {{\partial f_{2}} \over {\partial x_{2}}} ((1-\alpha) + \alpha)  = 2x_{2} - 4
\end{eqnarray*}
as ${{\partial f_{2}} \over {\partial x_{2}}} = 2 x_{2}$ and ${{\partial f_{2}} \over {\partial x_{1}}} = -4$. \\ \\
Clearly, $\tilde{f}$ is differentiable with $\alpha$-level functions $\tilde{f}_{1}^{\prime}(x_{1}, x_{2}, \alpha) $ and $\tilde{f}_{2}^{\prime}(x_{1}, x_{2}, \alpha)$, and the fuzzy derivative $\tilde{f}^{\prime}(\tilde{x}) = 2 \odot \tilde{x} \ominus 4$. \\ \\
By applying the necessary optimality condition, there exists an $\alpha \in [0,1]$ such that

\[f_{1}^{\prime}(x_{1}(x^{*},\alpha), x_{2}(x^{*}, \alpha), \alpha) = 0 \] or
\[f_{2}^{\prime}(x_{1}(x^{*},\alpha), x_{2}(x^{*}, \alpha), \alpha) = 0. \]

That is, there exists an $\alpha \in [0,1]$ such that

\[ 2x_{1} - 4 = 2 ((1-\alpha)(x-1) + \alpha x) - 4 = 0,\] or
\[ 2x_{2} - 4 = 2 ((1-\alpha)(x+1) + \alpha x) - 4 = 0.\]
Therefore, $x^{*} = 2$. Hence, $\tilde{x}^{*} = \tilde{2} = (1, 2, 3)$ is stationary point of the given fuzzy function.
\end{example}

\subsection{Sufficient condition for optimality}
\begin{theorem}
Let $\tilde{f}: X \subset F(\mathbb{R}) \to F(\mathbb{R})$ be a twice continuously differentiable fuzzy function. Suppose that $\widetilde{x}^{*} \in X$ is a stationary point of $\tilde{f}$.
\begin{enumerate}[(i)]
\item \label{part1} If $f_{1}^{\prime\prime}(x_{1},x_{2},\alpha) \geq 0$ for all $\tilde{x} \in X$, then $\tilde{x}^{*}$ is a non-dominated solution of ~$\min_{\tilde{x} \in X}~\tilde{f}$.
\item \label{part2} If $f_{1}^{\prime\prime}(x_{1}(x^{*},1),(x_{2}(x^{*},1),1) > 0$, then $x^{*}$ is a locally non-dominated solution of ~$\min_{\tilde{x} \in X}~\tilde{f}$.
\end{enumerate}
\end{theorem}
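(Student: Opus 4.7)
The plan is to argue both parts by contradiction, reducing to the classical single-variable second-order optimality argument applied to the crisp composite functions $x \mapsto f_1(x, \alpha)$ and $x \mapsto f_2(x, \alpha)$. Since $\tilde{f}$ is twice continuously differentiable in the sense of Definition~\ref{def3.4}, these composites are genuinely $C^2$ in $x$, so one-dimensional Taylor expansion with Lagrange remainder is available for each fixed $\alpha$.

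For part (\ref{part1}), I would assume that some $\tilde{x} \in X$ strictly dominates $\tilde{f}(\tilde{x}^*)$, i.e., $f_i(x, \alpha) \leq f_i(x^*, \alpha)$ for $i = 1, 2$ and every $\alpha$, with strict inequality at some index-level pair $(i_0, \alpha_0)$. Because $\tilde{x}^*$ is stationary, Theorem~\ref{thm3} supplies an $\alpha^* \in [0,1]$ at which either $f_1'$ or $f_2'$ vanishes at $\tilde{x}^*$; I would treat the case $f_1'(x^*, \alpha^*) = 0$ first, and handle $f_2'(x^*, \alpha^*) = 0$ by an identical symmetric argument (which is the reason $f_1''$ is paired with the $f_1'$ branch of the stationarity dichotomy — the hypothesis must be applied to the branch that actually vanishes). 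Applying Taylor's theorem to $x \mapsto f_1(x, \alpha^*)$ yields
\[f_1(x, \alpha^*) = f_1(x^*, \alpha^*) + \tfrac{1}{2}\, f_1''(\xi, \alpha^*)(x - x^*)^2\]
for some $\xi$ between $x^*$ and $x$, and the standing sign hypothesis $f_1''(x_1, x_2, \alpha) \geq 0$ on $X$ immediately gives $f_1(x, \alpha^*) \geq f_1(x^*, \alpha^*)$. Combined with the domination inequality $f_1(x, \alpha^*) \leq f_1(x^*, \alpha^*)$, equality must hold, forcing $f_1''(\xi, \alpha^*)(x - x^*)^2 = 0$; the rigidity this imposes, together with the strict-inequality clause at $(i_0, \alpha_0)$, then produces the contradiction.

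For part (\ref{part2}), the argument is local. I would work at $\alpha^* = 1$, at which $x_1(x, 1) = x_2(x, 1) = x$, so $f_1(\cdot, 1)$ reduces to a $C^2$ function of the single crisp variable $x$ alone and stationarity becomes an honest vanishing of the ordinary derivative. The hypothesis $f_1''(x_1(x^*,1), x_2(x^*,1), 1) > 0$ together with continuity of $f_1''$ produces a crisp neighborhood $U$ of $x^*$ on which $f_1''(\cdot, 1) > 0$, so Taylor's theorem on $U$ delivers the strict inequality $f_1(x, 1) > f_1(x^*, 1)$ for every $x \neq x^*$ in $U$. Choosing $\epsilon > 0$ small enough that every $\tilde{x} \in B(\tilde{x}^*, \epsilon) \cap X$ has its underlying $x$ lying in $U$ (possible because $d_F$ dominates the discrepancy in the level endpoints), this strict inequality rules out $\tilde{f}(\tilde{x}) \prec \tilde{f}(\tilde{x}^*)$ on that ball.

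The main obstacle I anticipate is closing the strict-vs-nonstrict gap in part (\ref{part1}): the Taylor estimate with only $f_1'' \geq 0$ produces a nonstrict inequality, and reconciling this with the strict-inequality clause of $\prec$ when the index-level pair $(i_0, \alpha_0)$ differs from $(1, \alpha^*)$ requires a careful propagation argument — one must argue that the forced equality $f_1(x, \alpha^*) = f_1(x^*, \alpha^*)$ pins down $x = x^*$ (via $f_1''(\xi, \alpha^*)(x - x^*)^2 = 0$ and some positivity somewhere on the segment) so that no strict inequality elsewhere can survive. Getting the bookkeeping right between the two indices $i$ and $\alpha$, and between the symmetric $f_1$ and $f_2$ branches of stationarity, is the delicate part of the proof.
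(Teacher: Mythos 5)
Your overall strategy is the same one the paper uses --- a one-variable Taylor expansion with Lagrange remainder of the crisp level functions $x \mapsto f_i(x,\alpha)$, followed by the sign condition on $f_1''$ --- but the step you defer to the end (``the rigidity this imposes \dots then produces the contradiction'') is exactly the step that cannot be carried out from the stated hypotheses, and you never supply it. Concretely: (a) stationarity, as defined through Theorem \ref{thm3}, gives only \emph{one} level $\alpha^*$ at which $f_1'$ \emph{or} $f_2'$ vanishes at $x^*$. On the $f_2'$ branch the theorem assumes nothing whatsoever about $f_2''$, so your ``identical symmetric argument'' has no hypothesis to invoke; and the first-order term in the Taylor expansion of $f_1(\cdot,\alpha^*)$ does not drop out there, so the inequality $f_1(x,\alpha^*)\ge f_1(x^*,\alpha^*)$ is not available either. (b) Even on the $f_1'$ branch, $f_1''\ge 0$ yields only $f_1''(\xi,\alpha^*)(x-x^*)^2=0$, which does not pin down $x=x^*$ (take $f_1$ affine in $x$, so $f_1''\equiv 0$); and the strict inequality required by the definition of $\prec$ may be carried entirely by $f_2$ at some other level $\alpha_0$, on which no hypothesis bears at all. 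So the contradiction never materializes. For comparison, the paper's own proof simply writes the expansion $f_1(x,\alpha)-f_1(x^*,\alpha)=\tfrac12 f_1''(z,\alpha)(x-x^*)^2$ for \emph{every} $\alpha$ (i.e., it tacitly assumes the first-order term vanishes at all levels, which is more than stationarity provides) and then jumps from ``$f_1(x,\alpha)\ge f_1(x^*,\alpha)$ for all $\alpha$'' to ``no $\tilde x$ dominates $\tilde x^*$'', skipping the same $f_2$ issue. Your proposal has the merit of naming these gaps honestly, but naming them is not closing them, and as far as I can see they cannot be closed from the hypotheses as stated.

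For part (ii), your reduction to the level $\alpha=1$ relies on $x_1(x,1)=x_2(x,1)=x$, which holds only for fuzzy variables with a singleton core and fails for the trapezoidal $\tilde x$ that the paper explicitly allows; and it again presupposes that the ordinary derivative of $f_1(\cdot,1)$ vanishes at $x^*$, which the stationarity definition does not deliver (the level at which a derivative vanishes may be some $\alpha^*\neq 1$, possibly on the $f_2'$ branch). The paper offers no detail here beyond ``can be proved similarly,'' so there is nothing in its argument that rescues this step either.
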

\begin{proof}
Since $\tilde{f}$ is twice continuously differentiable, the lower $\alpha$-level function $f_{1}\left(x_{1}(x,\alpha), x_{2}(x,\alpha), \alpha\right)$ is also a twice continuously differentiable function for each $\alpha \in [0, 1]$. \\ \\
Then, by Taylor's expansion of $f_{1}$ at $x^*$, we have for $x \neq x^{*}$,
\begin{eqnarray*}
f_{1}\left(x_{1}(x,\alpha), x_{2}(x,\alpha),\alpha\right) - f_{1}\left(x_{1}(x^{*},\alpha),~ x_{2}(x^{*},~ \alpha),\alpha\right) = \tfrac{1}{2} f_{1}^{\prime\prime}(x_{1}(z, \alpha),x_{2}(z, \alpha), \alpha) (x-x^{*})^2,
\end{eqnarray*}
where $z = x^{*} + \tau(x- x^{*})$, $\tau\in(0, 1)$. \\ \\
Under the hypothesis in Part (\ref{part1}), $f_{1}^{\prime\prime}(x_{1}(z, \alpha),x_{2}(z, \alpha), \alpha) \geq 0$. \\ \\
Thence, we have
\[
f_{1}(x_{1}(x,\alpha), x_{2}(x,\alpha),\alpha) \ge f_{1}(x_{1}(x^{*},\alpha), x_{2}(x^{*},\alpha),\alpha) \text{ for all } x \neq x^*.
\]
Therefore, there cannot be any $\tilde{x}$ in $X$ such that $\tilde{f}(\tilde{x})$ dominates $\tilde{f}(\tilde{x}^{*})$. Hence, $\tilde{x}^{*}$ is a non-dominated solution of ~$\min_{\tilde{x} \in X}~\tilde{f}$. \\ \\
The Part (\ref{part2}) can be proved similarly. \qed
\end{proof}

\begin{example}
Consider the fuzzy function given in Example \ref{ex42}. We see that $\tilde{f}^{\prime}(\tilde{x}) = 2 \odot \tilde{x}~ \ominus~ 4$ and  $\tilde{f}^{\prime\prime}(\tilde{x}) = 2$ which is a crisp function. Therefore, $\tilde{f}$ is two times continuously differentiable. With the help of the sufficient optimality condition, since $f_{1}^{\prime\prime}(x_{1}(x,\alpha), x_{2}(x,\alpha), \alpha) = 2 > 0$, $\tilde{x}^{*} = \tilde{2}$ is a non-dominated point of $\tilde{f}$.
\end{example}

\section{Conclusion}\label{section5}
In this paper, a new differentiability concept for fuzzy functions of fuzzy variable has been introduced. The algebraic properties of the derivatives have been explored. Besides, a necessary and sufficient optimality condition for a fuzzy optimization problem with fuzzy variables has been given, which is not yet studied in the literature. It is to note that the proposed optimality conditions are independent of the parameter $\alpha$ of the $\alpha$-cut and it makes the results easy to apply in different types of fuzzy optimization problems. In future we will attempt to identify a KKT-type optimality conditions for a constrained fuzzy optimization problem with fuzzy variable.

\section*{Funding} This research did not receive any specific grant from funding agencies in the public, commercial, or not-for-profit sectors.


\end{document}